\newtheorem{thm}{Theorem}[section]
\newtheorem{cor}[thm]{Corollary}
\newtheorem{prop}[thm]{Proposition}
\newtheorem{lem}[thm]{Lemma}
\theoremstyle{definition}
\newtheorem*{defin}{Definition}
\theoremstyle{definition}
\newtheorem{exa}{Example}
\newtheorem*{keywords}{Keywords}
\newtheorem*{msc}{MSC}
\numberwithin{equation}{section}
\def\eq#1{{\rm(\ref{#1})}}
\def\Eq#1#2{\ifthenelse{\equal{#1}{*}}
  {\begin{equation*}\begin{aligned}[]#2\end{aligned}\end{equation*}}
  {\begin{equation}\begin{aligned}[]\label{#1}#2\end{aligned}\end{equation}}}
\def\A{\mathscr{A}}
\def\D{\mathscr{D}}
\def\E{\mathscr{E}}
\def\G{\mathscr{G}}
\def\M{\mathscr{M}}
\def\P{\mathscr{P}}
\newcommand\R{\mathbb{R}}
\newcommand\N{\mathbb{N}}
\newcommand\Z{\mathbb{Z}}
\newcommand\Q{\mathbb{Q}}
\def\WQ{\mathscr{Q}}
\def\WR{\mathscr{R}}
\newcommand{\operator}[1]{\mathop{\vphantom{\sum}\mathchoice
{\vcenter{\hbox{\LARGE $#1$}}}
{\vcenter{\hbox{\Large $#1$}}}{#1}{#1}}\displaylimits}
\def\Mm{\operator{\mathscr{M}}}
\def\Ar{\operator{\mathscr{A}}}       
\newcommand{\QA}[1]{\A_{#1}}
\DeclareMathOperator{\Quot}{Quot}
\DeclareMathOperator{\sign}{sign}
\newcommand{\dotvec}[3][SKIPPED]{
\ifthenelse{\equal{#1}{SKIPPED}}
  {#2,\dots,#3}
  {\underbrace{#2,\dots,#3}_{#1\text{ entries}}}
}
\title
{On Kedlaya type inequalities for weighted means}
\author{Zsolt P\'ales\footnote{{\it Corresponding author}, Institute of Mathematics, University of Debrecen, Pf.\ 12, 4010 Debrecen, Hungary. E-mail: {\tt pales@science.unideb.hu}}\,\, and Pawe{\l} Pasteczka\footnote{Institute of Mathematics, Pedagogical University of Cracow,  Podchor\k{a}\.{z}ych str~2, 30-084 Cracow, Poland. E-mail: {\tt pawel.pasteczka@up.krakow.pl}}}
\begin{document}
\maketitle

\begin{abstract}
In 2016 we proved that for every symmetric, repetition invariant and Jensen concave mean $\mathscr{M}$ the Kedlaya-type inequality
$$
\mathscr{A}\big(x_1,\mathscr{M}(x_1,x_2),\ldots,\mathscr{M}(x_1,\ldots,x_n)\big)
\le \mathscr{M} \big( x_1, \mathscr{A}(x_1,x_2),\ldots,\mathscr{A}(x_1,\ldots,x_n)\big)
$$
holds for an arbitrary $(x_n)$ ($\mathscr{A}$ stands for the arithmetic mean). We are going to prove the weighted counterpart of this inequality. More precisely, if $(x_n)$ is a vector with corresponding (non-normalized) weights $(\lambda_n)$ and $\mathscr{M}_{i=1}^n(x_i,\lambda_i)$ denotes the weighted mean then, under analogous conditions on $\mathscr{M}$, the inequality
$$
\mathscr{A}_{i=1}^n \big( \mathscr{M}_{j=1}^i (x_j,\lambda_j),\:\lambda_i\big) 
\le \mathscr{M}_{i=1}^n \big( \mathscr{A}_{j=1}^i (x_j,\lambda_j),\:\lambda_i\big)
$$
holds for every $(x_n)$ and $(\lambda_n)$ such that the sequence $(\frac{\lambda_k}{\lambda_1+\cdots+\lambda_k})$ is decreasing.
\end{abstract}

\begin{keywords}
Discrete mean; Weighted mean; Power mean; Quasi-arithmetic mean; Gini mean; Deviation mean; Jensen convexity, concavity; Kedlaya inequality.
\end{keywords}

\begin{msc}
Primary: 26D15, Secondary: 39B62.
\end{msc}

\section{Introduction}

In 1994 Kedlaya \cite{Ked94}, justifying Holland's conjecture \cite{Hol92}, proved that
\Eq{*}{
\frac{x_1+\sqrt{x_1 x_2}+\cdots+\sqrt[n]{x_1 x_2\cdots x_n}}{n} \le \sqrt[n]{x_1 \cdot \frac{x_1+x_2}{2} \cdots \frac{x_1+x_2+\cdots+x_n}{n}}
}
for every $x \in \R_+^n$ and $n\in\N$.

It motivated us to consider the following definition \cite{PalPas16}. Mean $\M \colon \bigcup_{n=1}^\infty I^n \to I$ ($I$ is an interval) is a \emph{Kedlaya mean} if (from now on $\A$ will denote arithmetic mean)
\Eq{DKI}{
\A\big(x_1,\M(x_1,x_2),\dots,\M(x_1,x_2,\dots,x_n)\big)
\le \M\big(x_1,\A(x_1,x_2),\dots,\A(x_1,x_2,\dots,x_n)\big) 
}
for every $n \in \N$ and $x \in I^n$. 

In this setting Kedlaya's result could be express briefly as {\it geometric mean is a Kedlaya mean}. Nevertheless, there appears a natural problem -- to find a broad family of Kedlaya means. For example, it is quite easy to prove that $\min$ and arithmetic means are Kedlaya means. Moreover convex combination of Kedlaya means are again a Kedlaya mean.

Some approach to this problem was given recently by authors in \cite{PalPas16}. We are going to present this result in a while, but we need to introduce some properties of means first.

Let $I\subseteq\R$ be an interval and let $\M \colon\bigcup_{n=1}^{\infty} I^n \to I$ be an arbitrary mean, i.e., for 
all $n\in\N$ and $(\dotvec{x_1}{x_n})\in I^n$, we assume that $\M$ satisfies the inequality 
\Eq{*}{
  \min(\dotvec{x_1}{x_n})\leq\M(\dotvec{x_1}{x_n})\leq\max(\dotvec{x_1}{x_n}).
}
We say that $\M$ is \emph{symmetric}, \emph{(strictly) increasing}, and \emph{Jensen convex (concave)} if, for all 
$n\in\N$, the $n$-variable restriction $\M|_{I^n}$ is a symmetric, (strictly) increasing in each of its variables, and 
Jensen convex (concave) on $I^n$, respectively.

A mean $\M$ is called \emph{repetition invariant} if, for all $n,m\in\N$
and $(\dotvec{x_1}{x_n})\in I^n$, the following identity is satisfied
\Eq{*}{
  \M(\dotvec{\dotvec[m]{x_1}{x_1}}{\dotvec[m]{x_n}{x_n}})
   =\M(\dotvec{x_1}{x_n}).
}

Having this in hand, let us recall one of the most important result from this paper.

\begin{thm}[\cite{PalPas16}, Theorem 2.1]
Every symmetric, Jensen concave and repetition invariant mean is a Kedlaya mean.
\end{thm}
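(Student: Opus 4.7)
The plan is to derive the Kedlaya inequality \eq{DKI} by combining a multi-term refinement of Jensen concavity with a combinatorial decomposition made possible by repetition invariance and symmetry. The first step is to enhance the two-term Jensen inequality
$$\M\bigl(\tfrac{u+v}{2}\bigr)\ge\tfrac{1}{2}\bigl(\M(u)+\M(v)\bigr)$$
into a $k$-term version
$$\M\bigl(\tfrac{1}{k}(u^{(1)}+\cdots+u^{(k)})\bigr)\ge\tfrac{1}{k}\bigl(\M(u^{(1)})+\cdots+\M(u^{(k)})\bigr)$$
valid for every $k\in\N$ and every family of tuples $u^{(1)},\dots,u^{(k)}$ of equal length. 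Iterating the midpoint inequality delivers this only for dyadic $k=2^j$. The passage to an arbitrary $k$ is bridged by repetition invariance: each $u^{(j)}$ is lifted to a longer tuple by repeating every entry the same number of times (which does not change $\M(u^{(j)})$), and symmetry is then used to rearrange a large dyadic midpoint of the lifted tuples so that it matches, coordinate by coordinate, the lifted target $\frac{1}{k}(u^{(1)}+\cdots+u^{(k)})$.

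Once the multi-term Jensen concavity is available, I would set $L:=\operatorname{lcm}(1,2,\dots,n)$ and use repetition invariance to rewrite the right hand side as $\M\bigl(x_1,\A(x_1,x_2),\dots,\A(x_1,\dots,x_n)\bigr)=\M(\xi)$, where $\xi\in I^{nL}$ is the concatenation, for $k=1,\dots,n$, of a block of $L$ copies of $\A(x_1,\dots,x_k)$. The combinatorial heart of the proof is then the construction of $nL$ auxiliary tuples $u^{(s,t)}\in I^{nL}$, indexed by $s\in\{1,\dots,n\}$ and $t\in\{1,\dots,L\}$, whose uniform average equals $\xi$ and such that each $u^{(s,t)}$ has multiset equal to $nL/s$ copies of each of $x_1,\dots,x_s$. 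By symmetry and repetition invariance this forces $\M(u^{(s,t)})=\M(x_1,\dots,x_s)$. Matching the averaging condition $\xi=\frac{1}{nL}\sum_{s,t}u^{(s,t)}$ amounts, at every position $i$ inside block $k$ of $\xi$, to arranging that across all pairs $(s,t)$ the values $u^{(s,t)}_i$ consist of exactly $nL/k$ copies of each $x_1,\dots,x_k$ and nothing else. Since $k\mid L$ for every $k\le n$, such a scheme can be built block by block: in block $k$ only the tuples with $s\ge k$ need to contribute values other than $x_1,\dots,x_{k-1}$, and a simple cyclic prescription distributes them correctly.

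Combining the two ingredients, the multi-term Jensen concavity applied to the decomposition above produces
$$\M(\xi)\ge\frac{1}{nL}\sum_{s=1}^{n}\sum_{t=1}^{L}\M(u^{(s,t)})=\frac{1}{n}\sum_{s=1}^{n}\M(x_1,\dots,x_s)=\A\bigl(x_1,\M(x_1,x_2),\dots,\M(x_1,\dots,x_n)\bigr),$$
which is exactly \eq{DKI}. The hardest part, I expect, is the bookkeeping in the combinatorial step: one must simultaneously satisfy the row constraints (each $u^{(s,t)}$ has the prescribed multiset) and the column constraints (the uniform average in $(s,t)$ reproduces $\xi$ coordinate by coordinate). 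The lift from dyadic Jensen to general $k$-term Jensen is standard but also requires care; both difficulties are handled by the choice $L=\operatorname{lcm}(1,\dots,n)$, which ensures every divisibility condition used in the design is met.
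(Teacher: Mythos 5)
Your overall architecture is sound and is in spirit the same ``Jensen--Fubini'' idea that underlies the paper's proof: realize the argument of the right-hand side as the column-wise average of an array whose rows, after invoking symmetry and repetition invariance, evaluate under $\M$ to the terms of the left-hand side, then apply the multi-term equal-weight Jensen inequality (which, incidentally, follows from midpoint concavity by the standard Cauchy forward--backward induction; your detour through repetition invariance there is unnecessary). The genuine gap is in the step you yourself flag as the hardest: the existence of the tuples $u^{(s,t)}$. What you need is, for every pair $(s,k)$, a probability distribution $P_{s,k}$ on $\{x_1,\dots,x_{\min(s,k)}\}$ (the empirical distribution of the group-$s$ rows on block $k$) such that $\frac1n\sum_{k=1}^n P_{s,k}$ is uniform on $\{x_1,\dots,x_s\}$ (row/multiset constraint) and $\frac1n\sum_{s=1}^n P_{s,k}$ is uniform on $\{x_1,\dots,x_k\}$ (column/average constraint). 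This is a three-dimensional transportation problem with structural zeros; it is not settled by divisibility considerations nor by a ``simple cyclic prescription built block by block,'' because the blocks are coupled through the row constraints. Concretely, the natural guess $P_{s,k}=\delta_{x_{\min(s,k)}}$ for $s\neq k$ forces $P_{s,s}$ to put weight $\tfrac ns-n+s$ on $x_s$, which is negative already for $n=5$, $s=2$; and for $n\geq 3$ the solution is neither unique nor canonical, so an actual argument (an explicit formula, a max-flow feasibility verification, or an induction) is required and is missing.

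The decomposition does exist: one way to see it is to compose, for $j=n,n-1,\dots,2$, the two-group inequalities $\M(m_1,\dots,m_j)\geq\tfrac{j-1}{j}\M(m_1,\dots,m_{j-1})+\tfrac1j\M(x_1,\dots,x_j)$ with $m_k=\A(x_1,\dots,x_k)$, which telescope to exactly your statement with group weights $\tfrac1n$. But that \emph{is} the paper's proof (Theorem~\ref{thm:main} specialised to $\lambda\equiv1$ via Corollary~\ref{cor:main2}): at each step only a two-block array is needed, and it is built explicitly from a single $\theta$-proportional set per block with $\theta=\tfrac{k-1}{j-1}\leq1$, so the global design you want is never constructed in one shot. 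Your plan can therefore be completed, but as written the combinatorial core is asserted rather than proved, and the claim that the bookkeeping is ``handled by the choice $L=\operatorname{lcm}(1,\dots,n)$'' is not correct: divisibility is needed for the integrality of the final array but has no bearing on the feasibility of the underlying fractional transportation problem, which is where the real content lies.
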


As symmetry and repetition invariance are very natural axiom of means, Jensen concavity seamed to be the most restrictive one. Fortunately, it was characterized for many families of means. Many properties and characterizations are consequences of the general results obtained in a series of papers by Losonczi \cite{Los70a,Los71a,Los71b,Los71c,Los73a,Los77} (for Bajraktarevi\'c means and Gini means) and by Dar\'oczy \cite{Dar71b,Dar72b}, Dar\'oczy--Losonczi \cite{DarLos70}, Dar\'oczy--P\'ales \cite{DarPal82,DarPal83} (for deviation means) and by P\'ales \cite{Pal82a,Pal83b,Pal84a,Pal85a,Pal88a,Pal88d,Pal88e} (for deviation and quasi-deviation means), P\'ales and Pasteczka \cite{PalPas17a} (for quasi-arithmetic and homogeneous deviation means). Some results concerning Gaussian product were also given \cite{PalPas16}. It gives us plenty of examples of Kedlaya means.

Five years later in 1999 Kedlaya \cite{Ked99} improved his result to a weighted setting. In more details, he showed that

\begin{thm}[Kedlaya]
\label{thm:Kedlaya99}
Let $x_1,\dots,x_n,\lambda_1,\dots,\lambda_n$ be positive real numbers and define $\Lambda_k:=\lambda_1+\cdots+\lambda_k$. If the sequence $(\lambda_i/\Lambda_i)_{i=1}^n$ is nonincreasing then 
\Eq{*}{
\prod_{i=1}^n \Big( \sum_{j=1}^i \frac{\lambda_j}{\Lambda_i} x_j \Big)^{\lambda_i/\Lambda_n} \ge \sum_{j=1}^n \frac{\lambda_j}{\Lambda_n} \prod_{i=1}^j x_i^{\lambda_i/\Lambda_j}. 
}
\end{thm}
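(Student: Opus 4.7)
The plan is to proceed by induction on $n$. Denote by $A_i := \sum_{j=1}^{i}(\lambda_j/\Lambda_i)\,x_j$ and $G_i := \prod_{j=1}^{i} x_j^{\lambda_j/\Lambda_i}$ the weighted arithmetic and geometric means of $x_1,\ldots,x_i$, and write $L_n := \prod_{i=1}^{n} A_i^{\lambda_i/\Lambda_n}$, $R_n := \sum_{j=1}^{n}(\lambda_j/\Lambda_n)\,G_j$; the claim reads $L_n \ge R_n$. The case $n = 1$ is trivial since $A_1 = G_1 = x_1$.

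For the inductive step, set $p := \Lambda_{n-1}/\Lambda_n$, $q := \lambda_n/\Lambda_n$ (so $p+q=1$). One verifies at once the recurrences
\[
L_n = L_{n-1}^{p}\,A_n^{q}, \quad R_n = p\,R_{n-1} + q\,G_n, \quad A_n = p\,A_{n-1} + q\,x_n, \quad G_n = G_{n-1}^{p}\,x_n^{q},
\]
where $L_{n-1}$, $R_{n-1}$ use the renormalized weights $\lambda_i/\Lambda_{n-1}$ and satisfy $L_{n-1} \ge R_{n-1}$ by hypothesis. A direct chain fails: combining the inductive hypothesis with weighted AM--GM would require $L_{n-1}^{p}A_n^{q} \ge p L_{n-1} + q A_n$, but AM--GM in fact gives the reverse inequality, and the cushion $A_n \ge G_n$ is too weak to recover the desired direction.

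It is worth noting that the rational-weights duplication trick does not reduce the result to the unweighted Kedlaya-type theorem recalled above (for symmetric, Jensen concave, repetition invariant means). If each $x_i$ is replaced by $\lambda_i$ copies of itself (assuming integer weights) and one applies the unweighted inequality, the partial arithmetic and geometric means of the duplicated sequence at intermediate positions $k \in (\Lambda_{i-1}, \Lambda_i)$ are weighted interpolants of the consecutive $A_{i-1}, A_i$ and $G_{i-1}, G_i$ rather than copies of $A_i, G_i$. The resulting inequality is genuinely different from the weighted one at hand, so the hypothesis on $(\lambda_k/\Lambda_k)$ must enter essentially.

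My plan is therefore to fix $x_1,\ldots,x_{n-1}$ and analyze $f(x_n) := L_n - R_n$ on $(0,\infty)$. The derivative $f'(x_n) = q^{2}\bigl(L_{n-1}^{p}A_n^{q-1} - G_{n-1}^{p}x_n^{q-1}\bigr)$ vanishes at a unique critical point $x_n^{\ast}$ characterized by $(L_{n-1}/G_{n-1})^{p} = (A_n^{\ast}/x_n^{\ast})^{1-q}$; at this point one checks the identity $L_n\cdot x_n^{\ast} = A_n^{\ast}\cdot G_n^{\ast}$, so that $f(x_n^{\ast}) \ge 0$ reduces to the residual inequality $G_n^{\ast} A_{n-1}/x_n^{\ast} \ge R_{n-1}$. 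After verifying the boundary behavior as $x_n \to 0^{+}$ and $x_n \to +\infty$ (which can be handled by the inductive hypothesis applied to reduced configurations), only this residual inequality remains. The main obstacle is to understand how the hypothesis $\lambda_n/\Lambda_n \le \lambda_{n-1}/\Lambda_{n-1}$ constrains the critical-point ratio $A_n^{\ast}/x_n^{\ast}$ against $L_{n-1}/G_{n-1}$, and to identify a strengthened form of the inductive hypothesis --- plausibly a weighted comparison of $L_{n-1}^{\alpha} A_{n-1}^{1-\alpha}$ with $R_{n-1}$ for $\alpha$ in a range dictated by the weight condition --- that both survives the induction and closes the argument at the critical point.
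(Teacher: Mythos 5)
Your submission is a proof plan, not a proof: the decisive step is explicitly left open. After correctly observing that the naive induction chain and the duplication trick both fail, you reduce the problem (at the interior critical point of $f(x_n)=L_n-R_n$) to the residual inequality $G_n^{\ast}A_{n-1}/x_n^{\ast}\ge R_{n-1}$, and then state that one still needs ``to identify a strengthened form of the inductive hypothesis \dots that both survives the induction and closes the argument.'' That unidentified strengthened hypothesis is precisely where the monotonicity of $(\lambda_k/\Lambda_k)$ must do its work, i.e.\ it is the entire mathematical content of the theorem; without it nothing is proved. Moreover, your claim that the boundary behavior is ``handled by the inductive hypothesis applied to reduced configurations'' is itself doubtful: as $x_n\to0^{+}$ one needs $L_{n-1}^{p}(pA_{n-1})^{q}\ge pR_{n-1}$, which via $R_{n-1}\le L_{n-1}$ would follow from $A_{n-1}\ge L_{n-1}$ --- but already for $n-1=2$ with equal weights, $A_2\ge L_2=\sqrt{x_1A_2}$ fails whenever $x_2<x_1$. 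So the boundary analysis also requires an inequality strictly stronger than the plain inductive hypothesis $L_{n-1}\ge R_{n-1}$, and the case where the critical point fails to exist (so that the infimum is attained only in the limit $x_n\to\infty$) is not addressed either.

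For comparison, the paper does not prove Theorem~\ref{thm:Kedlaya99} by a one-dimensional induction at all: it obtains it as the special case $\M=\P_0$ (the weighted geometric mean, which is symmetric and Jensen concave) of Theorem~\ref{thm:main} together with Proposition~\ref{prop:main_R}. There the key inequality \eq{E:IND0} is still proved ``one index at a time'' and summed, but each step is verified by applying the Jensen--Fubini inequality \eq{E:JF} to a two-dimensional simple function built from the blocks $B_k$, $C_k$ and the $\theta$-proportional subsets of Lemma~\ref{L:theta}; the hypothesis $\lambda\in V_n$ enters only to guarantee that the proportions $\frac{\lambda_j\Lambda_{k-1}}{\lambda_k\Lambda_{j-1}}$ do not exceed $1$. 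That construction is what replaces the ``strengthened inductive hypothesis'' you are searching for; if you want to salvage your approach, you would need to actually formulate and prove such a strengthening (e.g.\ a family of inequalities interpolating between $L_{n-1}$ and $A_{n-1}$ against $R_{n-1}$), which as it stands you have not done.
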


Motivated by these preliminaries, we are going to struggle with a weighted counterpart of Kedlaya inequality. Before it could be done we need to make some introduction to weighted means in abstract setting. We need to realize that there is no formal agreement concerning this definition. They were introduced for particular families only.

In this situation let us present weighted deviation and quasi-deviation means only. Formal definition of weighted means in the abstract setting will be introduced in the following section.

For an interval $I$, and a deviation function  $E \colon I^2 \to \R$ ($E(x,\cdot)$ is continuous and strictly increasing and $E(x,x)=0$, $x \in I$), for $x \in I^n$, we define a mean $y=\D_E(x)$ as a unique solution of equation
\Eq{E:Dev_def}{
\sum_{i=1}^n E(x_i,y)=0.
}
Its weighted counterpart is defined for any $x \in I^n$ and $\lambda \in \R_+^n$ as a unique solution of equation
\Eq{E:WDev_def}{
\sum_{i=1}^n \lambda_i \cdot E(x_i,y)=0.
}

This definition could be generalized further; if a function $E$ satisfying the following properties:
\begin{enumerate}[(a)]\itemsep=-2pt
 \item for all $(x,t) \in I^2$, $\sign E(x,t)=\sign(x-t)$;
 \item for all $x \in I$, $E(x,\cdot)$ is continuous;
 \item for all $x,y \in I$, the mapping $I \ni t \mapsto E(x,t)/E(y,t)$, $x<t<y$ is strictly increasing,
\end{enumerate}
then equalities \eqref{E:Dev_def} and \eqref{E:WDev_def} define the so-called \emph{quasi-deviation} and \emph{weighted quasi-deviation} means, respectively.

At the moment, each time we are dealing with a family which is a particular case of quasi-deviation means, weighted means are immediately defined. 
In this way, we can simply obtain quasi-arithmetic means, Gini means, Bajraktarevi\'c means etc. (cf. \cite{Bul03} for definitions) in their weighted setting.

Nevertheless, for the purpose of the present note, we need to separate the definition of weighted means from any particular family. This will be accomplished in the forthcoming section.

\section{Weighted means}

In this section we will introduce the notion of weighted means. Before we begin, let us underline few important facts. Weighted means are used very often among the literature. Most usually they are obtained by adding extra values to some symmetric operator (for example $\frac{\lambda_1 x_1+\cdots +\lambda_nx_n}{\lambda_1+\cdots+\lambda_n}$ instead of $\frac{x_1+\cdots+x_n}{n}$). It is done in this way that if we put $\lambda_1=\lambda_2=\dots=\lambda_n$ (very often weights are required to be normalized, that is $\sum \lambda_i=1$; see e.g. \cite{HarLitPol34}) then weighted mean goes back to non-weighted one. Due to this fact, whenever we say about weighted mean, its non-weighted counterpart is repetition invariant.

Let us also underline that in this definition weights are taken from some arbitrary ring $R \subset \R$. In fact, there are three particular rings which are significantly more important than any other: the ring of integers and the fields of rational numbers and real numbers.

As we will see, every repetition invariant mean generate (in a unique way) a weighted mean on rationals (roughly speaking it is implied by scaling invariance; see definition below). Reals are also of special interest, because each time we are dealing with quasi-deviation mean, we naturally request all real weights to be considered. 

\begin{defin}[Weighted means]
Let $I \subset \R$ be an arbitrary interval, $R \subset \R$ be a ring and, for 
$n\in\N$, define the set of $n$-dimensional weight vectors $W_n(R)$ by
\Eq{*}{
  W_n(R):=\{(\dotvec{\lambda_1}{\lambda_n})\in R^n\mid\lambda_1,\dots,\lambda_n\geq0,\,\lambda_1+\dots+\lambda_n>0\}.
}
\emph{A weighted mean on $I$ over $R$} or, in other words, \emph{an $R$-weighted mean on $I$} is a function 
\Eq{*}{
\M \colon \bigcup_{n=1}^{\infty} I^n \times W_n(R) \to I
}
satisfying the conditions (i)--(iv) presented below.
Elements belonging to $I$ will be called \emph{entries}; elements from $R$ -- \emph{weights}. 

\begin{enumerate}[(i)]
 \item \emph{Nullhomogeneity in the weights}: For all $n \in \N$, for all $(x,\lambda) \in I^n \times W_n(R)$, 
and $t \in R_+$,
 \Eq{*}{
   \M(x,\lambda)=\M(x,t \cdot \lambda),
 }
\item \emph{Reduction principle}: For all $n \in \N$ and for all $x \in I^n$, $\lambda,\mu \in W_n(R)$, 
\Eq{*}{
\M(x,\lambda+\mu)=\M(x\odot x,\lambda\odot\mu),
}
where $\odot$ is a \emph{shuffle operator}\footnote{This definition comes from the theory of computation. Perhaps the most famous (folk) result states that shuffling of two regular languages is again regular; see e.g. \cite{BloEsi97}.} defined as
\Eq{*}{
(\dotvec{p_1}{p_n})\odot (\dotvec{q_1}{q_n}):=(\dotvec{p_1,q_1}{p_n,q_n}).
}
\item \emph{Mean value property}: For all $n \in \N$, for all $(x,\lambda) \in I^n \times W_n(R)$
\Eq{*}{
\min(\dotvec{x_1}{x_n}) \le \M(x,\lambda)\le \max(\dotvec{x_1}{x_n}),
}
\item \emph{Elimination principle}: For all $n \in \N$, for all $(x,\lambda) \in I^n \times W_n(R)$ and for 
all $j\in\{1,\dots,n\}$ such that $\lambda_j =0$,
\Eq{*}{
\M(x,\lambda) = \M\big((x_i)_{i\in\{1,\dots,n\}\setminus\{j\}},(\lambda_i)_{i\in\{1,\dots,n\}\setminus\{j\}}\big),
}
i.e., entries with a zero weight can be omitted. 
\end{enumerate}
\end{defin}

For the sake of convenience, we will use the sum-type abbreviation
\Eq{*}{
\Mm_{i=1}^n(x_i,\lambda_i):=\M\big((\dotvec{x_1}{x_n}),(\lambda_1,\dots,\lambda_n)\big).
}

Let us begin with some technical lemma. To avoid misunderstanding, if we have a finite sequence $(a_1,\dots,a_n)$ and $k,m\in\{1,\dots,n\}$
such that $k<m$, then $(a_m,\dots,a_k)$ will be interpreted as the empty sequence. 

\begin{lem}
\label{lem:equiv_red}
Let $I$ be an arbitrary interval, $R \subset \R$ be a ring, $\M$ be a weighted mean defined on $I$ over $R$.  
For every $n \in \N$, $k\in\{1,\dots,n\}$, $x \in I^n$, $\lambda \in W_n(R)$ and a nonnegative number $\lambda_k'\in R$, we have
\Eq{*}{
  \M\big(&(x_1,\dots,x_{k-1},x_k,x_k,x_{k+1},\dots,x_n),\:(\lambda_1,\dots,\lambda_{k-1},\lambda_k,\lambda_k',\lambda_{k+1},\dots,\lambda_n)\big)\\
 &=\M\big((x_1,\dots,x_{k-1},x_k,x_{k+1},\dots,x_n),\:(\lambda_1,\dots,\lambda_{k-1},\lambda_k+\lambda_k',\lambda_{k+1},\dots,\lambda_n)\big)
}
\end{lem}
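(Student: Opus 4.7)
The plan is to derive the lemma as essentially an ``inverse'' of the reduction principle: we will split the combined weight $\lambda_k+\lambda_k'$ on the right-hand side into the sum of two weight vectors of length $n$, apply axiom (ii), and then use axiom (iv) to eliminate the many zero coordinates produced by the shuffle.

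Concretely, assuming for the moment that $\lambda_k>0$, introduce
\[
\mu:=(0,\dots,0,\lambda_k,0,\dots,0)\in W_n(R),\qquad
\nu:=(\lambda_1,\dots,\lambda_{k-1},\lambda_k',\lambda_{k+1},\dots,\lambda_n),
\]
where $\lambda_k$ in $\mu$ sits at position $k$. Then $\mu+\nu$ is exactly the weight vector appearing on the right-hand side of the lemma. Provided that $\nu$ also has positive total sum (so $\nu\in W_n(R)$), axiom (ii) applied to $\M(x,\mu+\nu)$ gives
\[
\M(x,\mu+\nu)=\M\bigl(x\odot x,\mu\odot\nu\bigr),
\]
and the shuffled weight vector $\mu\odot\nu$ equals $(0,\lambda_1,0,\lambda_2,\dots,0,\lambda_{k-1},\lambda_k,\lambda_k',0,\lambda_{k+1},\dots,0,\lambda_n)$. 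Iterated use of axiom (iv) strikes out every zero-weighted entry and leaves precisely the left-hand side of the lemma, with the pair $(\lambda_k,\lambda_k')$ appearing in the desired order at positions $k$ and $k{+}1$. (Choosing $\mu$ to carry $\lambda_k$ rather than $\lambda_k'$ is what forces this order.)

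The only subtlety — and the step that will need a brief case distinction — is whether $\mu$ and $\nu$ really lie in $W_n(R)$, since the reduction principle is stated only for such weight vectors. If $\lambda_k=0$, then $\mu$ is not a legal weight vector; but in that case the entry $x_k$ on the left-hand side carries weight $\lambda_k=0$, so axiom (iv) collapses the left-hand side directly to the right-hand side (using that $\lambda\in W_n(R)$ forces some other $\lambda_i$ to be positive). Similarly, if the sum of the coordinates of $\nu$ vanishes, then all $\lambda_i$ with $i\neq k$ as well as $\lambda_k'$ are zero, and both sides reduce via axiom (iv) and the mean value property (iii) to $x_k$. In all remaining configurations the reduction step above applies verbatim, which completes the argument. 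The main conceptual point is simply to recognize that what the lemma asserts is nothing but the reduction principle read from right to left, with the ``invisible'' zero weights absorbed by the elimination principle.
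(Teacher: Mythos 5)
Your argument is correct: it is the same two-step strategy as the paper's proof (read the reduction principle from right to left, then absorb the zero weights created by the shuffle via the elimination principle), but with a different choice of decomposition. The paper splits the right-hand weight vector as $\lambda+\lambda'$, where $\lambda$ is the \emph{given} vector (so $\lambda\in W_n(R)$ for free) and $\lambda'_i:=\delta_{ik}\lambda_k'$ carries only $\lambda_k'$; with that choice the only degenerate situation is $\lambda_k'=0$, which is dispatched in one line by the elimination principle, and no further case analysis is needed. Your split $\mu+\nu$, with $\mu$ carrying $\lambda_k$ alone and $\nu$ carrying everything else, yields the same shuffled vector and the same order of the pair $(\lambda_k,\lambda_k')$, but it forces you to check separately that $\mu$ and $\nu$ each have positive total weight, hence the two extra edge cases ($\lambda_k=0$, and $\nu$ summing to zero) — both of which you handle correctly, noting that they cannot occur simultaneously since $\lambda\in W_n(R)$. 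In short: same proof idea, marginally less economical bookkeeping; the paper's choice of summands is the one that lets the hypothesis on $\lambda$ do all the legitimizing work.
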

\begin{proof}
If $\lambda_k'=0$, then the statement follows from the elimination principle immediately. In the other case, for $i\in\{1,\dots,n\}$,
define $\lambda_i':=\delta_{ik}\lambda_k'$, where $\delta$ stands for the Kronecker symbol.
Applying the elimination principle iteratively $n-1$ times, and then using the reduction principle, we obtain
\Eq{*}{
\M\big((x_1,\dots,x_{k-1},x_k,x_k,x_{k+1},\dots,x_n),\:(\lambda_1,\dots,\lambda_{k-1},&\lambda_k,\lambda_k',\lambda_{k+1},\dots,\lambda_n)\big)\\
 &=\M(x\odot x,\lambda\odot\lambda')=\M(x,\lambda+\lambda'),
}
which is exactly the identitity to be proved.
\end{proof}

In the following theorem we will prove that a weighted mean defined on a ring can be extended to its quotient field denoted as $\Quot(R)$. 

\begin{thm}
\label{thm:fieldex}
Let $I$ be an interval, $R \subset \R$ be a ring, $\M$ be a weighted mean defined on $I$ over $R$.
Then there exists a unique mean $\widetilde\M$ defined on $I$ over $\Quot(R)$ such that 
\Eq{*}{
\widetilde \M \vert_{\bigcup_{n=1}^{+\infty} I^n \times W_n(R)} =\M.
}
Moreover if $\M$ is symmetric/monotone then so is $\widetilde\M$.
\end{thm}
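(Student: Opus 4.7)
The strategy is to extend $\M$ to $\Quot(R)$-weights by \emph{clearing denominators}. Given $\lambda = (\lambda_1,\dots,\lambda_n) \in W_n(\Quot(R))$, I write each $\lambda_i = p_i/q_i$ with $p_i \in R$ and $q_i \in R_+$ (always possible, replacing $p/q$ by $(-p)/(-q)$ if needed); then $s := q_1\cdots q_n \in R_+$ satisfies $s\lambda \in W_n(R)$, because each $s\lambda_i = p_i \prod_{j\ne i}q_j$ lies in $R$ and is nonnegative, while $\sum_i s\lambda_i = s\sum_i\lambda_i > 0$. Define
\[
\widetilde\M(x,\lambda) := \M(x, s\lambda).
\]
Well-definedness is the key point: if $s, s' \in R_+$ both clear denominators of $\lambda$, then two applications of nullhomogeneity of $\M$ (with factors $s', s \in R_+$) yield
\[
\M(x, s\lambda) = \M(x, s's\lambda) = \M(x, ss'\lambda) = \M(x, s'\lambda).
\]
Uniqueness of the extension follows along the same lines: any candidate $\widetilde\M'$, which must itself be nullhomogeneous on $\Quot(R)_+ \supset R_+$, necessarily satisfies $\widetilde\M'(x,\lambda) = \widetilde\M'(x, s\lambda) = \M(x, s\lambda)$ for any admissible $s$.

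Next I would verify axioms (i)--(iv) for $\widetilde\M$. For nullhomogeneity, if $t = a/b \in \Quot(R)_+$ and $s \in R_+$ clears denominators of $\lambda$, then $sb \in R_+$ clears those of $t\lambda$, and
\[
\widetilde\M(x, t\lambda) = \M(x, sa\lambda) = \M(x, s\lambda) = \widetilde\M(x,\lambda),
\]
the middle equality being nullhomogeneity of $\M$ with factor $a \in R_+$. For the reduction principle, pick a single $s \in R_+$ that clears denominators of both $\lambda$ and $\mu$ simultaneously (e.g., the product of all denominators involved); then $s(\lambda+\mu)=s\lambda+s\mu$ and $s(\lambda\odot\mu)=(s\lambda)\odot(s\mu)$, so the axiom transports directly from $\M$ to $\widetilde\M$. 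The mean-value property (iii) transfers tautologically, and the elimination principle (iv) too, once one notes that $s>0$ gives $(s\lambda)_j = 0 \iff \lambda_j = 0$. Symmetry and monotonicity of $\widetilde\M$ descend from those of $\M$ because scalar multiplication by $s \in R_+$ commutes with coordinate permutations and preserves componentwise inequalities.

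No step is genuinely deep; the only bookkeeping concern is to always work with \emph{positive} multipliers $s \in R_+$, so that $\M$'s nullhomogeneity applies verbatim, and, when handling two weight vectors as in the reduction principle, to choose a common $s$. Once these two conventions are fixed, every axiom reduces in one line to the corresponding axiom for $\M$.
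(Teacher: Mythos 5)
Your proposal is correct and follows essentially the same route as the paper: define $\widetilde\M(x,\lambda):=\M(x,s\lambda)$ by clearing denominators with a positive $s\in R$, check well-definedness via two applications of nullhomogeneity, and transport each axiom back to $\M$ using a common multiplier where two weight vectors are involved. Your explicit remark that uniqueness is forced by the nullhomogeneity any extension must satisfy is a small but welcome addition that the paper leaves implicit.
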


\begin{proof}
Fix $n \in \N$, $x \in I^n$, and $\lambda \in W_n(\Quot(R))$. Then there exists $q \in R$ such that $q\lambda \in W_n(R)$ (for example a product of all denominators). We define
\Eq{E:deftildeM}{
\widetilde \M (x,\lambda):=\M(x,q\lambda).
}
To prove the correctness of this definition, it suffices to show that it does not depend on the selection of $q$. Indeed, take $q' \in R$ such that $q'\lambda \in W_n(R)$.
We need to verify if the equality $\M(x,q\lambda)=\M(x,q'\lambda)$ is valid. However, applying the nullhomogeneity of $\M$ (twice), we get
\Eq{*}{
\M(x,q\lambda)=\M(x,q'q\lambda)=\M(x,qq'\lambda)=\M(x,q'\lambda).
}

In order to verify the nullhomogeneity of $\widetilde \M$, observe that every positive element of $\Quot(R)$ can be 
represented as $a/b$ for some $a,\,b \in R_+$. Then, obviously, $ bq \cdot (a/b) \cdot \lambda \in W_n(R)$. Thus
\Eq{*}{
\widetilde \M(x,(a/b) \cdot \lambda)=\M(x,bq \cdot (a/b) \cdot \lambda)=\M\big(x,a \cdot (q\lambda)\big)=\M(x,q\lambda)= \widetilde\M(x,\lambda).
}

To prove reduction principle, take $\lambda,\,\mu \in W_n(\Quot(R))$ arbitrarily. Then there exist $q,\,r \in R$ such that $q \lambda, r\mu \in W_n(R)$. In this case we also have $qr\lambda,\,qr\mu \in W_n(R)$. Then $(qr\lambda) \odot (qr\mu) \in W_{2n}(R)$ and $(qr\lambda)\odot(qr\mu)=qr\cdot(\lambda\odot\mu)$. Having these properties, we obtain
\Eq{*}{
\widetilde \M(x \odot x,\lambda \odot \mu)&=\M \big(x \odot x,qr\cdot(\lambda\odot\mu)\big)\\
 &=\M\big(x \odot x,(qr\lambda)\odot(qr\mu)\big)
 =\M\big(x,qr\lambda+qr\mu\big)=\widetilde\M(x,\lambda+\mu).
}

The two remaining properties (mean value property, elimination principle) are obvious. Moreover part is simply implied 
by \eq{E:deftildeM}.
\end{proof}

What we are going to prove now is that every repetition invariant (non-weighted) mean can be associated with a  
$\Z$-weighted and, in the virtue of Theorem~\ref{thm:fieldex}, a $\Q$-weighted mean. In fact this operation can be also 
reversed.

\begin{thm}\label{thm:weighted_nonweighted}
If $\M\colon\bigcup_{n=1}^\infty I^n\to I$ is a repetition invariant mean on $I$, then the formula  
\Eq{WeiQDef}{
\widetilde\M \big((\dotvec{x_1}{x_n}),(\lambda_1,\dots,\lambda_n)\big):=\M\big( 
\dotvec[\lambda_1]{x_1}{x_1},\dots,\dotvec[\lambda_n]{x_n}{x_n}\big)
}
defines a weighted mean $\widetilde\M\colon\bigcup_{n=1}^\infty I^n\times W_n(\Z) \to I$ on $I$ over $\Z$.

Conversely, if $\widetilde\M\colon\bigcup_{n=1}^\infty I^n\times W_n(\Z) \to I$ is a $\Z$-weighted mean on $I$, then 
\Eq{E:MMtilde}{
\M(\dotvec{x_1}{x_n}):=\widetilde \M\big((\dotvec{x_1}{x_n}),(\dotvec[n]11)\big)
}
is a repetition invariant mean on $I$. Furthermore these transformations are inverses of each other.
\end{thm}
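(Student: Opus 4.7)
The plan is to verify the four axioms for $\widetilde\M$ in the forward direction, verify repetition invariance for $\M$ in the converse direction, and then establish the bijection by direct computation using Lemma~\ref{lem:equiv_red}.

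For the forward direction, I would start from \eqref{WeiQDef} and check each axiom in turn. Nullhomogeneity follows from repetition invariance of $\M$: scaling the weight vector by $t\in\Z_+$ replaces each block of $\lambda_i$ copies of $x_i$ with $t\lambda_i$ copies, which leaves $\M$ unchanged. The mean value property is immediate because the expanded tuple has the same minimum and maximum as $(x_1,\dots,x_n)$, and the elimination principle is trivial — a zero weight simply yields an empty block in the expansion \eqref{WeiQDef}. The key observation for the reduction principle is that the shuffle operator preserves the block structure: in the expansion of $\widetilde\M(x\odot x,\lambda\odot\mu)$, the $\lambda_i$ copies of $x_i$ are immediately followed by the $\mu_i$ copies of $x_i$, producing exactly the same ordered tuple as the expansion of $\widetilde\M(x,\lambda+\mu)$. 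Since no reordering is needed, symmetry of $\M$ is not required.

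For the converse direction, repetition invariance of $\M$ defined by \eqref{E:MMtilde} requires showing
\[
\widetilde\M\bigl((\underbrace{x_1,\dots,x_1}_{m},\dots,\underbrace{x_n,\dots,x_n}_{m}),(\underbrace{1,\dots,1}_{mn})\bigr)
=\widetilde\M\bigl((x_1,\dots,x_n),(1,\dots,1)\bigr).
\]
Applying Lemma~\ref{lem:equiv_red} repeatedly to merge consecutive equal entries transforms the left-hand side into $\widetilde\M((x_1,\dots,x_n),(m,\dots,m))$, and one application of nullhomogeneity with $t=m$ collapses the weights to $(1,\dots,1)$. The mean value property of $\M$ transfers directly from $\widetilde\M$.

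For the final claim that the transformations are mutually inverse, starting from a repetition invariant $\M$, passing through \eqref{WeiQDef} and then \eqref{E:MMtilde} gives back $\M(x_1,\dots,x_n)$ on the nose, since each block $\underbrace{x_i,\dots,x_i}_{1}$ in \eqref{WeiQDef} is just $x_i$. In the other direction, starting from a $\Z$-weighted $\widetilde\M$, one obtains
\[
\widetilde\M'(x,\lambda)=\M(\underbrace{x_1,\dots,x_1}_{\lambda_1},\dots,\underbrace{x_n,\dots,x_n}_{\lambda_n})
=\widetilde\M\bigl((\underbrace{x_1,\dots,x_1}_{\lambda_1},\dots,\underbrace{x_n,\dots,x_n}_{\lambda_n}),(1,\dots,1)\bigr),
\]
and iterated application of Lemma~\ref{lem:equiv_red} recovers $\widetilde\M(x,\lambda)$. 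The only real subtlety to be careful with is zero weights on the right-hand side (where a block disappears): this case should be handled up front by first invoking the elimination principle to discard every index $i$ with $\lambda_i=0$, reducing to the strictly positive case. The main obstacle throughout is notational rather than conceptual — formalizing the "merging of consecutive blocks" via Lemma~\ref{lem:equiv_red} cleanly — since once one commits to working position by position within the ordered expansion, every axiom falls out mechanically.
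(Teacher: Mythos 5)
Your proposal is correct and follows essentially the same route as the paper: the axioms for $\widetilde\M$ are verified by direct block expansion (with the reduction principle coming from the shuffle preserving block order, no symmetry needed), and the converse repetition invariance is obtained by iterating Lemma~\ref{lem:equiv_red} together with nullhomogeneity. The only difference is that you spell out the mutual-inverse claim in more detail than the paper, which simply declares it clear.
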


\begin{proof}
Clearly, the transformations described in the theorem are inverses of each other.

Let $\M$ be a repetition invariant mean on $I$ and let $\widetilde\M$ be given by \eq{WeiQDef}.
We need to show that $\widetilde\M$ satisfies all properties (i)--(iv) listed in the definition of weighted means. 
First observe that $\widetilde\M$ obviously admits the mean value property. Elimination principle is also immediate 
because if $\lambda_j=0$ then element $x_j$ does not appear on the right hand side of \eq{WeiQDef}.

Let us now verify the nullhomogeneity in the weights. For $t \in \N_+$, we can apply repetition invariance of $\M$ to get,
\Eq{*}{
\widetilde\M \big((\dotvec{x_1}{x_n}),&(t\lambda_1,\dots,t\lambda_n)\big) 
=\M\big( \dotvec[t\cdot \lambda_1]{x_1}{x_1},\dots,\dotvec[t\cdot \lambda_n]{x_n}{x_n}\big)\\
&=\M\big( \dotvec[\lambda_1]{x_1}{x_1},\dots,\dotvec[\lambda_n]{x_n}{x_n}\big)
=\widetilde \M \big((\dotvec{x_1}{x_n}),(\lambda_1,\dots,\lambda_n)\big).
}

Finally, we will prove the reduction principle. We may assume that $\lambda,\,\mu \in \N^n$. Then, for all $x\in I^n$,
\Eq{*}{
\widetilde \M (x,\lambda+\mu)
&=\M\big( \dotvec[\lambda_1+\mu_1]{x_1}{x_1},\dots,\dotvec[\lambda_n+\mu_n]{x_n}{x_n}\big)\\
&=\M\big( \dotvec[\lambda_1]{x_1}{x_1},\dotvec[\mu_1]{x_1}{x_1},\dots,\dotvec[\lambda_n]{x_n}{x_n},\dotvec[\mu_n]{x_1}{x_1}\big)\\
&=\widetilde \M \big((\dotvec{x_1,x_1}{x_n,x_n}),(\dotvec{\lambda_1,\mu_1}{\lambda_n,\mu_n})\big)
=\widetilde \M (x\odot x,\lambda\odot\mu).
}

Now we will prove the converse part. Let $\widetilde\M$ be a $\Z$-weighted mean on $I$. By the definition, we get
\Eq{*}{
\M(\dotvec{x_1}{x_n})=\widetilde \M\big((\dotvec{x_1}{x_n}),(\dotvec11)\big) \le \max(\dotvec{x_1}{x_n});
}
similarly $\M(\dotvec{x_1}{x_n}) \ge \min(\dotvec{x_1}{x_n})$. 

To prove the repetition invariance of $\M$, take any $m \in \N$. 
By the definition $\widetilde\M$ and the nullhomogeneity, this property is equivalent to
\Eq{*}{
  \widetilde\M\big((\dotvec[m]{x_1}{x_1},\dots,\dotvec[m]{x_n}{x_n}),(\dotvec[mn]{1}{1})\big)
   =\widetilde\M\big((\dotvec{x_1}{x_n}),(\dotvec[n]mm)\big).
}
To see this equality, we shall apply Lemma~\ref{lem:equiv_red} iteratively to encompass each block appearing on the left hand side.
\end{proof}

Let us now introduce some natural properties of weighted means. A weighted mean $\M\colon\bigcup_{n=1}^\infty I^n\times W_n(R) \to I$ is said to be \emph{symmetric}, if for all $n \in \N$, $x \in I^n$, $\lambda \in W_n(R)$, and for all permutations $\sigma\in S_n$, 
\Eq{*}{
\M(x,\lambda) =\M(x\circ\sigma,\lambda\circ\sigma).
}
We will call a weighted mean $\M$ \emph{Jensen concave} if, for all $n \in \N$, $x,y \in I^n$ and $\lambda \in W_n(R)$,
\Eq{E:JF2}{
\M \Big( \frac{x+y}2 , \lambda \Big) \ge\frac12 \big( \M(x,\lambda)+\M(y,\lambda) \big).
}
If, on the above indicated domain, the reversed inequality is satisfied, then $\M$ is said to be Jensen convex. First 
observe that, given a (symmetric) Jensen concave mean $R$ weighted mean $\M\colon\bigcup_{n=1}^\infty I^n\times W_n(R) 
\to I$, the mean $\widehat\M\colon\bigcup_{n=1}^\infty (-I)^n\times W_n(R) \to(-I)$ defined by
\Eq{hat}{
  \widehat\M(x,\lambda):=-\M(-x,\lambda) \qquad(n\in\N,\,x\in(-I)^n,\,\lambda\in W_n(R))
}
is a (symmetric) Jensen convex $R$-weighted mean on $(-I)$. Therefore, everything that we obtain in terms of Jensen concavity, can be rewritten for Jensen convexity, and vice versa.

Another important observation is that, due to the mean value property, means are locally bounded functions. Therefore, as a consequence of the celebrated Bernstein--Doetsch Theorem (cf.\ \cite{BerDoe15}, \cite{Kuc85}), Jensen concavity or Jensen convexity is equivalent to their concavity or convexity, respectively. Henceforth, it implies their continuity with respect to their entries over the interior of $I^n$. 

A weighted mean $\M$ is said to be \emph{continuous in the weights} if, for all $n \in \N$ and $x \in I^n$, the mapping $\lambda \mapsto \M(x,\lambda)$ is continuous on $W_n(R)$.

The following two statements are easy to see.

\begin{thm}\label{thm:sym}
If $\M$ is a symmetric repetition invariant mean on $I$, then the function $\widetilde\M$ defined by the formula \eq{WeiQDef}
is a symmetric weighted mean on $I$ over $\Z$. 

Conversely, if $\widetilde \M$ is a symmetric $\Z$-weighted mean on $I$, then the function $\M$ defined by 
\eq{E:MMtilde} is a symmetric repetition invariant mean on $I$. 
\end{thm}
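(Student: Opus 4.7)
The backbone of this proof is already in place: Theorem~\ref{thm:weighted_nonweighted} ensures that the formulas \eq{WeiQDef} and \eq{E:MMtilde} convert means of one kind into means of the other kind, and that the two transformations are mutual inverses. What remains is to check a single property in each direction, namely that symmetry is preserved.

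For the forward direction, suppose $\M$ is symmetric and repetition invariant, and take $n\in\N$, $(x,\lambda)\in I^n\times W_n(\Z)$ and a permutation $\sigma\in S_n$. Then on the right-hand side of \eq{WeiQDef}, the tuple
\Eq{*}{
  \big(\dotvec[\lambda_{\sigma(1)}]{x_{\sigma(1)}}{x_{\sigma(1)}},\dots,\dotvec[\lambda_{\sigma(n)}]{x_{\sigma(n)}}{x_{\sigma(n)}}\big)
}
and the tuple $\big(\dotvec[\lambda_1]{x_1}{x_1},\dots,\dotvec[\lambda_n]{x_n}{x_n}\big)$ have the same length ($\lambda_1+\dots+\lambda_n$) and agree as multisets. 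Hence they differ only by a permutation of the arguments, and the symmetry of $\M$ on that fixed arity yields $\widetilde\M(x\circ\sigma,\lambda\circ\sigma)=\widetilde\M(x,\lambda)$.

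For the converse direction, suppose $\widetilde\M$ is a symmetric $\Z$-weighted mean and let $\M$ be defined by \eq{E:MMtilde}. Given $\sigma\in S_n$, observe that the constant weight vector $(\dotvec[n]11)$ is invariant under $\sigma$. Therefore
\Eq{*}{
  \M(x\circ\sigma)
  &=\widetilde\M\big(x\circ\sigma,(\dotvec[n]11)\big)
   =\widetilde\M\big(x\circ\sigma,(\dotvec[n]11)\circ\sigma\big)\\
  &=\widetilde\M\big(x,(\dotvec[n]11)\big)=\M(x),
}
where the penultimate equality uses the symmetry of $\widetilde\M$.

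There is no real obstacle here; the only point requiring care is the combinatorial observation in the forward direction that replacing $(x,\lambda)$ by $(x\circ\sigma,\lambda\circ\sigma)$ in \eq{WeiQDef} merely rearranges the inflated argument list of $\M$, so that symmetry of $\M$ (on each individual arity, which is all that is granted) suffices.
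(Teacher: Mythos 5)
Your proof is correct; the paper itself omits the argument (the theorem is introduced with ``easy to see''), and what you supply is exactly the natural verification: the structural properties come from Theorem~\ref{thm:weighted_nonweighted}, the forward direction reduces to the observation that permuting $(x,\lambda)$ only rearranges the inflated argument list of fixed arity $\lambda_1+\dots+\lambda_n$, and the converse uses the $\sigma$-invariance of the constant weight vector. Nothing further is needed.
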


\begin{thm}\label{thm:conc}
If $\M$ is a Jensen concave repetition invariant mean on $I$, then the function $\widetilde\M$ defined by the formula \eq{WeiQDef}
is a Jensen concave weighted mean on $I$ over $\Z$. 

Conversely, if $\widetilde \M$ is a Jensen concave $\Z$-weighted mean on $I$, then the function $\M$ defined by 
\eq{E:MMtilde} is a Jensen concave repetition invariant mean on $I$. 
\end{thm}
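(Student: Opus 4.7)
The strategy is to treat the two implications separately and in parallel with the proof of Theorem~\ref{thm:sym}; both directions ultimately reduce to a single application of Jensen concavity on a vector of appropriate length, exploiting that the definition \eq{WeiQDef} turns weighted evaluation into unweighted evaluation on a longer vector.

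\textbf{Forward direction.} Assuming $\M$ is Jensen concave and repetition invariant, fix $n\in\N$, $x,y\in I^n$, and $\lambda\in W_n(\Z)$. Put $N:=\lambda_1+\cdots+\lambda_n$. Then \eq{WeiQDef} lets us write
\Eq{*}{
\widetilde\M(x,\lambda)=\M(u),\qquad \widetilde\M(y,\lambda)=\M(v),\qquad \widetilde\M\Bigl(\tfrac{x+y}{2},\lambda\Bigr)=\M(w),
}
where $u,v,w\in I^{N}$ are the vectors obtained by repeating $x_i$, $y_i$, and $\frac{x_i+y_i}{2}$ exactly $\lambda_i$ times, respectively. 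By construction $w=\tfrac{u+v}{2}$ coordinatewise, so Jensen concavity of $\M|_{I^{N}}$ applied to $u,v$ immediately yields
\Eq{*}{
\widetilde\M\Bigl(\tfrac{x+y}{2},\lambda\Bigr)=\M(w)\ge\tfrac12\bigl(\M(u)+\M(v)\bigr)=\tfrac12\bigl(\widetilde\M(x,\lambda)+\widetilde\M(y,\lambda)\bigr),
}
which is \eq{E:JF2} for $\widetilde\M$.

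\textbf{Converse direction.} Assuming $\widetilde\M$ is a Jensen concave $\Z$-weighted mean and defining $\M$ by \eq{E:MMtilde}, Theorem~\ref{thm:weighted_nonweighted} already guarantees that $\M$ is a repetition invariant mean. To verify Jensen concavity of $\M|_{I^n}$ for fixed $n$, I would simply substitute $\lambda:=(1,\dots,1)\in W_n(\Z)$ into the Jensen concavity inequality for $\widetilde\M$ and read off the desired inequality via \eq{E:MMtilde}.

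\textbf{Main obstacle.} The only genuine subtlety is cosmetic: in the forward step, if some $\lambda_i=0$, that coordinate contributes no copies in the expansion of \eq{WeiQDef}, so the vectors $u,v,w$ live in $I^{N}$ with $N<\sum_i\max(\lambda_i,1)$. This is fine precisely because the proof of Theorem~\ref{thm:weighted_nonweighted} already encodes the elimination principle into $\widetilde\M$; once acknowledged, no further case-splitting on zero weights is needed and the single Jensen inequality on $I^{N}$ closes the argument.
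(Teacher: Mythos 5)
Your proof is correct; the paper actually omits the argument entirely (Theorems~\ref{thm:sym} and~\ref{thm:conc} are introduced with ``the following two statements are easy to see''), and your argument — aligning the $\lambda_i$-fold repeated vectors so that the weighted Jensen inequality becomes a single unweighted Jensen inequality in $N=\lambda_1+\cdots+\lambda_n$ variables, and specializing to $\lambda=(1,\dots,1)$ for the converse — is exactly the intended routine verification. The handling of zero weights is also fine, since the dropped coordinates are the same in $u$, $v$ and $w$.
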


Usually, instead of explicitly writing down weights, we can consider a function with finite range as the argument of the 
given mean. Let $R$ be a subring of $\R$. We say that $D\subseteq\R$ is an \emph{$R$-interval} if $D$ is of the form 
$[a,b)$, where $a,b\in R$. The Cartesian product of two $R$-intervals will be called an $R$-rectangle. The length of an 
interval $D$ will be denoted by $|D|$.

Given an $R$-interval $D$, a function $f\colon D\to I$ is called \emph{$R$-simple} if there exist $n \in \N$ and a 
partition of $D$ into $R$-intervals $\{D_i\}_{i=1}^{n}$ such that $\sup D_i=\inf D_{i+1}$ for $i\in\{1,\dots,n-1\}$ and 
$f$ is constant on each subinterval $D_i$. Then, for an $R$-weighted mean $\M$ on $I$, we set
\Eq{*}{
\Mm f(x)dx:=\Mm_{i=1}^n (f|_{D_i},|D_i|)=\M((f|_{D_1},\dots,f|_{D_n}),(|D_1|,\dots,|D_n|)).
}

Given an $R$-rectangle $D\times E$, a function $f \colon D \times E \to I$ is called \emph{$R$-simple} if there exists 
$n\in\N$ and a partition of $D \times E$ into $R$-rectangles $\{D_i\times E_i\}_{i=1}^n$ such that $f$ is constant on 
every $D_i\times E_i$. One can easily see that, for every $x\in D$, $y\in E$, the mappings $f(x,\cdot)$ and $f(\cdot,y)$ 
are $R$-simple functions on $E$ and $D$, respectively. 

A subset $H\subseteq \R$ or $H\subseteq \R^2$ will be called \emph{$R$-simple} if its characteristic function is 
$R$-simple. It is easy to see that a set $H$ is $R$-simple if and only is it is the disjoint union of finitely many 
$R$-intervals or $R$-rectangles, respectively. 

For an $R$-simple set $H\subseteq\R$, the sum of the lengths of the decomposing $R$-intervals will be denoted by $|H|$. 
In fact, this is the Lebesgue measure of $H$.

In this section we will prove two important lemmas

\begin{lem}\label{L:theta}
Let $R$ be a ring such that $\Q R\subseteq R$.
Then, for every $R$-rectangle $D\times E$ and $\theta \in \Q \cap [0,1]$, there exists a $R$-simple subset $H\subseteq D\times E$ such that
\begin{enumerate} 
 \item for all $x \in D$, $|\{y \colon (x,y)\in H\}|=\theta \cdot |E|$,
 \item for all $y \in E$, $|\{x \colon (x,y)\in H\}|=\theta \cdot |D|$.
\end{enumerate}
A set $H$ with the above properties will be called a $\theta$-proportional subset of $D\times E$.
\end{lem}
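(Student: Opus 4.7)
The plan is to produce $H$ explicitly as a union of cells in a regular grid, patterned like a thickened cyclic diagonal. First I would write $\theta = p/q$ with integers $0 \le p \le q$ and $q \ge 1$. Since $\Q R \subseteq R$ and $|D|,|E| \in R$, the numbers $|D|/q$ and $|E|/q$ both lie in $R$; consequently $D$ and $E$ admit partitions
\Eq{*}{
D = D_1 \cup \dots \cup D_q, \qquad E = E_1 \cup \dots \cup E_q,
}
into $q$ consecutive $R$-intervals of equal length $|D|/q$ and $|E|/q$, respectively. This yields a $q \times q$ grid of $R$-rectangles $D_i \times E_j$.

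Next, I would single out the cells forming a cyclic diagonal strip of width $p$: put
\Eq{*}{
S := \{(i,j) \in \{1,\dots,q\}^2 \colon (j-i) \bmod q \in \{0,1,\dots,p-1\}\}
}
and define $H := \bigcup_{(i,j) \in S}(D_i \times E_j)$. Being a finite disjoint union of $R$-rectangles, $H$ is $R$-simple (if $p=0$ then $H = \emptyset$, which is trivially $R$-simple).

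The two slicing properties then follow by a counting argument. For each fixed $i$, the set $\{j \colon (i,j)\in S\}$ has exactly $p$ elements (the residues $i, i+1, \dots, i+p-1$ modulo $q$), so for any $x \in D_i$ the horizontal section $\{y \colon (x,y) \in H\}$ is a disjoint union of $p$ of the $E_j$'s, with total length $p \cdot |E|/q = \theta |E|$. Symmetrically, for each fixed $j$, the set $\{i \colon (i,j)\in S\}$ also has cardinality $p$, yielding the vertical slice condition.

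There is no deep obstacle. The real design choice is picking a cell pattern that is balanced simultaneously in rows and in columns; a naive "take the bottom $p$ rows" satisfies only the horizontal requirement. The cyclic-diagonal pattern above bypasses this because it is invariant under the simultaneous cyclic shift of both coordinates, which automatically enforces row- and column-balance.
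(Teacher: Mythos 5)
Your proof is correct and follows essentially the same route as the paper: both write $\theta=p/q$, cut the rectangle into a $q\times q$ grid of equal $R$-rectangles, and take the cyclic diagonal strip of width $p$ (the paper merely phrases this in the unit square and transports it by an affine bijection, whereas you partition $D\times E$ directly, using $\Q R\subseteq R$ to keep the cut points in $R$). No gaps; the row/column counting argument you give is exactly what makes the paper's construction work as well.
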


\begin{proof} Let $D$ and $E$ be arbitrary $R$-intervals. Let us recall first that there exits an affine bijection $\varphi \colon [0,1)^2\to D \times E$. If $D=[a,b)$ and $E=[c,d)$, then such an affine bijection can be given by
\Eq{*}{
  \varphi(t,s):=((1-t)a+tb,(1-s)c+sd)\qquad((t,s)\in[0,1)^2).
}

Assume that $\theta$ is of the form $p/q$, where $q\in \N$, $p\in\{0,\dots,q\}$. Now set
\Eq{*}{
H_{i,j}:=\left[\frac iq,\frac {i+1}q\right) \times \left[\frac jq,\frac {j+1}q\right),\qquad i,\,j\in\{\dotvec0{q-1}\}.
}
Finally, define the set $H_0\subseteq[0,1)^2$ by
\Eq{*}{
H_0:=\bigcup_{i=0}^{q-1} \bigcup_{j=i}^{i+p-1} H_{i,j(\text{mod } q)}.
}
It is simple to verify that $H_0$ is a $\theta$-proportional subset of $[0,1)^2$. Therefore, the set $H:=\varphi(H_0)$ is $\theta$-proportional subset of $D\times E$.
\end{proof}

The inequality stated in the next result will be called the Jensen--Fubini inequality in the sequel. We remind the 
reader that the symbol $\A$ stands for the arithemetic mean.

\begin{lem}
Let $D$ and $E$ be $\Q$-intervals. Let $\M\colon\bigcup_{n=1}^\infty I^n\times W_n(\Q) \to I$ be a $\Q$-weighted mean on $I$. Then, $\M$ is Jensen concave if and only if, for every $\Q$-simple function $f \colon D \times E \to I$, we have
\Eq{E:JF}{
  \Ar \Big(\Mm f(x,y)\:dy\Big)dx \leq \Mm \Big(\Ar f(x,y)\:dx\Big)dy .
}
In addition, the validity of the reversed inequality in \eq{E:JF} characterizes the Jensen convexity of $\M$.
\end{lem}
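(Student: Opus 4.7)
The plan is to prove both implications by reducing the Jensen--Fubini inequality to a product-partition representation of $\Q$-simple functions.

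For the ``only if'' direction, I would first refine the given $\Q$-rectangular decomposition underlying $f\colon D\times E\to I$: by collecting the horizontal and vertical edges of the rectangles on which $f$ is constant, I obtain partitions $\{D_i\}_{i=1}^n$ of $D$ and $\{E_j\}_{j=1}^m$ of $E$ into $\Q$-intervals with $f\equiv a_{ij}$ on $D_i\times E_j$. Writing $p_i:=|D_i|/|D|\in\Q\cap[0,1]$, the two sides of \eq{E:JF} unfold to
$$
\Ar\Big(\Mm f(x,y)\,dy\Big)dx \;=\; \sum_{i=1}^n p_i\,\Mm_{j=1}^m(a_{ij},|E_j|), \qquad \Mm\Big(\Ar f(x,y)\,dx\Big)dy \;=\; \Mm_{j=1}^m\Big(\sum_{i=1}^n p_i a_{ij},\,|E_j|\Big).
$$
Thus \eq{E:JF} is precisely a rational convex-combination concavity statement for the map $z\mapsto \Mm_{j=1}^m(z_j,|E_j|)$ applied to the vectors $z^{(i)}:=(a_{ij})_{j=1}^m$ with weights $p_i$. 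The hypothesis gives midpoint concavity \eq{E:JF2}; the standard iteration argument -- equalising the $p_i$ to a common power-of-two denominator by appending copies of the target point $\sum_i p_i z^{(i)}$ when necessary -- promotes midpoint concavity to the required rational inequality, yielding \eq{E:JF}.

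For the converse, I would test \eq{E:JF} on a specific $\Q$-simple function encoding the data of the Jensen concavity inequality. Given $x,y\in I^n$ and $\lambda\in W_n(\Q)$, set $\Lambda_k:=\lambda_1+\cdots+\lambda_k$ and consider the $\Q$-intervals $D_1:=[0,1)$, $D_2:=[1,2)$, $D:=[0,2)$, together with $E_j:=[\Lambda_{j-1},\Lambda_j)$ and $E:=[0,\Lambda_n)$, so that $|D_1|=|D_2|=1$ and $|E_j|=\lambda_j$. Defining $f$ to take the value $x_j$ on $D_1\times E_j$ and $y_j$ on $D_2\times E_j$, a direct computation yields
$$
\Ar\Big(\Mm f(s,t)\,dt\Big)ds=\tfrac12\bigl(\M(x,\lambda)+\M(y,\lambda)\bigr), \qquad \Mm\Big(\Ar f(s,t)\,ds\Big)dt=\M\bigl(\tfrac{x+y}{2},\lambda\bigr),
$$
so \eq{E:JF} for this particular $f$ collapses exactly to \eq{E:JF2}. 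The Jensen convex characterisation follows by reversing all inequalities, or equivalently by passing through the involution $\M\mapsto\widehat\M$ from \eq{hat}.

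The only step that requires genuine care is the midpoint-to-rational upgrade in the ``only if'' direction; but the argument is classical, and in fact the local boundedness of $\M$ (by the mean value property) together with the Bernstein--Doetsch theorem quoted after \eq{E:JF2} already delivers continuity and full concavity of $z\mapsto\Mm_{j=1}^m(z_j,|E_j|)$ on the interior of $I^m$. Everything else is bookkeeping -- the refinement of the rectangular partition works because the endpoints of the original $\Q$-rectangles are rational, so their first and second coordinates generate $\Q$-interval partitions of $D$ and $E$, respectively.
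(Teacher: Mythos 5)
Your proposal is correct and follows essentially the same route as the paper: refine to a product grid of $\Q$-rectangles, identify both sides of \eq{E:JF} with a finite Jensen-type inequality for $z\mapsto\Mm_{j}(z_j,|E_j|)$, and for the converse test \eq{E:JF} on exactly the same two-column function on $[0,2)\times E$. The only cosmetic difference is that the paper first clears denominators to a unit $\Z$-grid so that the needed concavity statement is the equal-weight $n$-point Jensen inequality, whereas you keep rational weights $p_i$ and invoke the midpoint-to-rational upgrade directly; both rest on the same Bernstein--Doetsch/iteration fact.
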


\begin{proof}
Assume first that $\M$ is Jensen concave. Let $f \colon D \times E \to I$ be a $\Q$-simple function.
Then $D\times E$ can be partitioned into a finite number of $\Q$-rectangles $\{D_i\times E_i\}_{i=1}^N$ such that $f \vert_{D_i\times E_i}$ is constant.  Therefore there exists a number $M \in \N$ (being a product of all denominators of the endpoints of $D_i$ and $E_i$) such that $M\cdot D_i$ and $M\cdot E_i$ are $\Z$-intervals for all $i \in \{1,\dots,N\}$.

Having this, we can stretch $f$ to a $\Z$-simple function $\widetilde f \colon (M \cdot D) \times (M \cdot E)\to I$ defined by
\Eq{*}{
\widetilde f (x,y):=f(x/M,y/M).
}
On the other hand, the nullhomogeneity of $\M$ and also of $\A$ in the weights implies
\Eq{*}{
&\Mm \Big( \Ar f(x,y)\:dx\Big)dy=\Mm \Big( \Ar \widetilde f(x,y)\:dx\Big)dy\\
\text{ and }\quad&\Ar \Big( \Mm f(x,y)\:dy\Big)dx=\Ar \Big(\Mm \widetilde f(x,y)\:dy\Big)dx.
}
Therefore we may assume that initial function $f$ is $\Z$-simple and $D$, $E$ are $\Z$-intervals.
Furthermore (just to make the notation simple) we can shift the left-bottom corner of $D \times E$ to the origin, that is we assume that $D=[0,n)$, $E=[0,m)$ for some $m,n \in\N$. Then we can construct a matrix $(a_{i,j})_{\substack{i \in \{1,\dots,n\} \\ j \in \{1,\dots,m\}}}$ with entries in $I$ such that 
\Eq{*}{
\widetilde f(x,y)=a_{i,j}\text{ for }(x,y) \in [i-1,i)\times [j-1,j)\text{, where }i \in \{1,\dots,n\},\: j \in \{1,\dots,m\}.
}
Then we have
\Eq{*}{
\Mm \Ar \widetilde f(x,y)\:dx\:dy &=\Mm_{j=1}^m  \left(\frac{a_{1,j}+\cdots+a_{n,j}}{n},\,1 \right), \\
\Ar \Mm \widetilde f(x,y)\:dy\:dx &=\frac 1n \sum_{i=1}^n \Mm_{j=1}^m (a_{i,j},\,1).
}
Finally, applying the Jensen concavity of $\M$, we obtain the following inequality
\Eq{*}{
  \frac 1n \sum_{i=1}^n \Mm_{j=1}^m (a_{i,j},\,1)
  \leq\Mm_{j=1}^m  \left(\frac{a_{1,j}+\cdots+a_{n,j}}{n},\,1 \right)
}
which implies \eq{E:JF}.

To complete the proof, assume that \eq{E:JF} holds for all $\Q$-simple function $f \colon D \times E \to I$. To prove the Jensen concavity of the mean $\M$, let $x,y\in I^n$ and $\lambda\in W_n(\Q)$. We may assume that $\lambda_i>0$ for all $i\in\{1,\dots,n\}$. Let $E$ be a $\Q$-interval which is partitioned into some $\Q$-intervals $\{E_i\}_{i=1}^n$ such that $|E_i|=\lambda_i$ for all $i\in\{1,\dots,n\}$. Now construct the function $f\colon [0,2)\times E\to I$ as follows:
\Eq{*}{
  f(u,v)=\begin{cases}
         x_i &\mbox{if } u\in [0,1),\, v\in E_i,\\ 
         y_i &\mbox{if } u\in [1,2),\, v\in E_i.
         \end{cases}
}
Then, obviously, $f$ is a $\Q$-simple function. Applying \eq{E:JF} for this $f$, it follows that
\Eq{*}{
  \frac12 \big( \M(x,\lambda)+\M(y,\lambda) \big)
  =\Ar \Big(\Mm f(u,v)\:dv\Big)du 
  \leq \Mm \Big(\Ar f(u,v)\:du\Big)dv 
  =\M \Big( \frac{x+y}2 , \lambda \Big),
}
which shows that $\M$ is Jensen concave, indeed.

The last assertion of the theorem can be obtained by the transformation $\M\mapsto\widehat\M$ defined in \eq{hat}.
\end{proof}

\section{Results: The weighted Kedlaya inequality}

We are heading toward the inequality which is main target for the present paper. 

To have a weighed counterpart of the Kedlaya inequality, we have to take weight sequences $\lambda$ from $R$ with a positive first member.
Therefore, for a given ring $R$, we define
\Eq{*}{
  W^0_n(R)&:=\{(\lambda_1,\dots,\lambda_n)\in R^n\mid \lambda_1>0,\,\lambda_2,\dots,\lambda_n\geq0\}, \qquad (n \in \N),\\
  W^0(R)&:=\{\lambda\in R^\N\mid \lambda_1>0,\,\lambda_2,\lambda_3,\dots\geq0\}.
}
The nonincreasingness of the ratio sequence $\big(\tfrac{\lambda_i}{\lambda_1+\cdots+\lambda_i}\big)$ will be a key assumption for Kedlaya type inequalities, therefore, we also set
\Eq{*}{
  V_n(R)&:=\big\{\lambda\in W_n^0(R)\mid \big(\tfrac{\lambda_i}{\lambda_1+\cdots+\lambda_i}\big)_{i=1}^n \mbox{ is nonincreasing}\big\}, 
  \qquad (n \in \N),\\
   V(R)&:=\big\{\lambda\in W^0(R)\mid \big(\tfrac{\lambda_i}{\lambda_1+\cdots+\lambda_i}\big)_{i=1}^\infty \mbox{ is nonincreasing}\big\}.
}

Given $n\in\N$ and a weight sequence $\lambda \in W_n^0(R)$, we say that a weighted mean $\M \colon  \bigcup_{n=1}^{\infty} I^n \times W_n(R) \to I$ satisfies the \emph{$n$ variable $\lambda$-weighted Kedlaya inequality}, or shortly, the \emph{$(n,\lambda)$-Kedlaya inequality} if 
\Eq{KI}{
\Ar_{k=1}^n \left( \Mm_{i=1}^k (x_i,\lambda_i),\:\lambda_k\right) \le \Mm_{k=1}^n \left( \Ar_{i=1}^k (x_i,\lambda_i),\:\lambda_k\right)
  \qquad(x\in I^n).
}
If $\lambda \in W^0(R)$ and this inequality holds for all $n\in\N$, then we say that $\M$ satisfies the \emph{$\lambda$-weighted Kedlaya inequality}, or shortly, the \emph{$\lambda$-Kedlaya inequality}. The main result of the present note is to provide a sufficient condition for the weight sequence $\lambda$ and the weighted mean $\M$ such that the $n$ variable $\lambda$-weighted Kedlaya inequality is satisfied by $\M$.  

\begin{thm} 
\label{thm:main}
Let $n \in \N$, $\lambda \in V_n(\Q)$ and let $\M\colon\bigcup_{n=1}^\infty I^n\times W_n(\Q) \to I$ be a symmetric and Jensen concave $\Q$-weighted mean on $I$. Then $\M$ satisfies the $n$ variable $\lambda$-weighted Kedlaya inequality \eq{KI}.

On the other hand, if $\M$ is a symmetric and Jensen convex $\Q$-weighted mean on $I$, then \eq{KI} holds with reversed inequality. 
\end{thm}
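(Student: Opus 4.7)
The plan is to apply the Jensen--Fubini inequality (the last lemma of Section~2) to a carefully chosen $\Q$-simple function $f\colon D\times E\to I$, where $D:=E:=[0,\Lambda_n)$ is partitioned into the subintervals $D_j:=E_j:=[\Lambda_{j-1},\Lambda_j)$ of length $\lambda_j$. I would construct $f$ with entries in $\{x_1,\dots,x_n\}$ so that for every $y\in E_k$ the $x$-measure of $\{x\in D:f(x,y)=x_i\}$ equals $\tfrac{\Lambda_n}{\Lambda_k}\lambda_i$ (for $i\le k$), and symmetrically, for every $x\in D_j$ the $y$-measure of $\{y\in E:f(x,y)=x_i\}$ equals $\tfrac{\Lambda_n}{\Lambda_j}\lambda_i$ (for $i\le j$). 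Given such an $f$, nullhomogeneity, symmetry, and the reduction principle of Lemma~\ref{lem:equiv_red} immediately yield
\Eq{*}{
  \Ar_x f(x,y)\,dx=\Ar_{i=1}^k(x_i,\lambda_i)\ \text{for }y\in E_k, \qquad
  \Mm_y f(x,y)\,dy=\Mm_{i=1}^j(x_i,\lambda_i)\ \text{for }x\in D_j,
}
after which the Jensen--Fubini inequality delivers precisely \eq{KI}.

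The function $f$ is built cell by cell: within $D_j\times E_k$ I would partition the cell into disjoint $\theta_{j,k}^{(i)}$-proportional subsets for $i=1,\dots,\min(j,k)$---a simultaneous multi-colour enhancement of Lemma~\ref{L:theta}, obtained by decomposing the cyclic diagonal pattern in its proof into consecutive arcs of prescribed rational widths summing to $1$---and set $f\equiv x_i$ on the $i$-th piece. For the two marginal conditions on $f$ to hold, the fractions $\theta_{j,k}^{(i)}\in\Q$ must be nonnegative and satisfy $\sum_i\theta_{j,k}^{(i)}=1$ together with the two linear systems
\Eq{*}{
  \sum_{j\ge i}\theta_{j,k}^{(i)}\lambda_j=\tfrac{\Lambda_n}{\Lambda_k}\lambda_i\ (i\le k), \qquad
  \sum_{k\ge i}\theta_{j,k}^{(i)}\lambda_k=\tfrac{\Lambda_n}{\Lambda_j}\lambda_i\ (i\le j).
}

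The main obstacle is proving that this linear system admits a nonnegative rational solution, and this is precisely the step that uses the hypothesis $\lambda\in V_n(\Q)$. Starting from the forced value $\theta_{n,n}^{(n)}=1$ and working downward in $i$, the row- and column-marginal equations (together with the partition condition) determine the fractions recursively; the monotonicity inequalities $\lambda_{i+1}/\Lambda_{i+1}\le\lambda_i/\Lambda_i$ comprising $V_n(\Q)$ translate at each stage into exactly the non-negativity of the newly determined diagonal fraction (for example, for $n=2$ the condition $\lambda_2/\Lambda_2\le\lambda_1/\Lambda_1$ is what gives $\theta_{1,1}^{(1)}\ge 0$). Once $f$ is in hand, the Jensen--Fubini inequality completes the proof. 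Finally, for symmetric Jensen convex $\M$ the reversed inequality in \eq{KI} follows at once from the concave case applied to the mean $\widehat\M$ on $-I$ defined via \eq{hat}.
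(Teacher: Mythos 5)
Your global strategy---encoding the two sides of \eq{KI} as the two iterated means of a single $\Q$-simple function on $[0,\Lambda_n)^2$ and invoking the Jensen--Fubini inequality \eq{E:JF}---is sound, and your marginal bookkeeping is correct: if a function $f$ with the two prescribed slice-distributions exists, then nullhomogeneity gives $\Mm_y f(x,y)\,dy=\Mm_{i=1}^j(x_i,\lambda_i)$ on $D_j$ and $\Ar_x f(x,y)\,dx=\Ar_{i=1}^k(x_i,\lambda_i)$ on $E_k$, and \eq{E:JF} yields \eq{KI} in one stroke. The ``multi-colour'' refinement of Lemma~\ref{L:theta} (splitting the cyclic diagonal band into consecutive arcs of prescribed rational widths) is also legitimate. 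The reduction of the convex case to the concave one via \eq{hat} is exactly what the paper does.

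The genuine gap is the existence of the nonnegative rational array $\theta^{(i)}_{j,k}$ satisfying the partition condition together with both marginal systems. This is the entire combinatorial content of the theorem (it is essentially Kedlaya's 1999 lemma transplanted to this setting), and your argument for it does not hold up as stated. The system is not ``determined recursively'': for $n\ge4$ it is underdetermined (a rough count gives more unknowns than independent equations), so one must exhibit a point of a polytope, not solve a triangular system; and your illustrative case $n=2$ is off, since there $\theta^{(1)}_{1,1}=1$ is forced by the partition condition alone and $V_2(\Q)=W_2^0(\Q)$, so no inequality is being used. (For $n=3$ the system does happen to be fully forced and the single nontrivial nonnegativity constraint, $\theta^{(2)}_{2,2}=1-\frac{\lambda_1\lambda_3}{\lambda_2\Lambda_2}\ge0$, is indeed equivalent to $\lambda_2/\Lambda_2\ge\lambda_3/\Lambda_3$; but that is the only case where ``forced'' is true.) By contrast, the paper avoids this coupled transportation problem entirely: it proves, for each $j\in\{2,\dots,n\}$, the telescoping step
\Eq{*}{
\frac{\Lambda_{j-1}}{\Lambda_j}\Mm_{i=1}^{j-1}(m_i,\lambda_i)+\frac{\lambda_j}{\Lambda_j}\Mm_{i=1}^{j}(x_i,\lambda_i)\le\Mm_{i=1}^{j}(m_i,\lambda_i),
\qquad m_k:=\Ar_{i=1}^k(x_i,\lambda_i),
}
and sums over $j$. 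The auxiliary function for fixed $j$ lives on $[0,\Lambda_j)^2$ and takes the partial arithmetic means $m_1,\dots,m_j$ as values alongside $x_1,\dots,x_j$; each horizontal band then needs only \emph{one} proportional subset, with the explicit ratio $\frac{\lambda_j\Lambda_{k-1}}{\lambda_k\Lambda_{j-1}}$, and the hypothesis $\lambda\in V_n(\Q)$ enters solely as the statement that this ratio is at most $1$. If you want to keep your one-shot construction, you must either prove the existence of the nonnegative array (e.g.\ by an explicit greedy/north-west-corner filling and a verification that $V_n$ guarantees feasibility at every step), or switch to the telescoping device with the $m_i$'s as values.
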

\begin{proof}
The statement of the theorem is trivial if $n=1$. Therefore, we may assume that $n\geq2$. 
Denote, for $k\in\{1,\dots,n\}$, the partial sum $\lambda_1+\dots+\lambda_k$ by $\Lambda_k$ and set $\Lambda_0:=0$.

First observe that if $\lambda_i=0$ for some $i\in \{2,\dots,n\}$, then, for all $j\in \{i,\dots,n\}$, we get $\lambda_j/\Lambda_j\le \lambda_i/\Lambda_i=0$, that is $\lambda_j=0$ for all $j\in \{i,\dots,n\}$ and, consequently, the $n$ variable Kedlaya inequality is equivalent to the $(i-1)$ variable Kedlaya inequality. Thus, from now on we assume that $\lambda_i>0$ for all $i \in \{1,\dots,n\}$.

Take an arbitrary vector $x \in I^n$ and, for $k \in \{1,\dots,n\}$, denote 
\Eq{*}{
m_k:= \Ar_{i=1}^k(x_i,\lambda_i)=\frac{\lambda_1x_1+\dots+\lambda_kx_k}{\Lambda_k}.
}
In what follows, we are going to prove that, for all $j \in \{2,\dots,n\}$,
\Eq{E:IND0}{
\Lambda_{j-1} \cdot \Mm_{i=1}^{j-1}(m_i,\lambda_i) + \lambda_j \cdot \Mm_{i=1}^{j}(x_i,\lambda_i) \le\Lambda_j \cdot \Mm_{i=1}^{j}(m_i,\lambda_i).
}
Then, applying this inequality for all $j \in \{2,\dots,n\}$, summing up side by side, after simple reduction, we get
\Eq{*}{
\sum_{j=1}^n \lambda_j \cdot \Mm_{i=1}^{j}(x_i,\lambda_i) \le\Lambda_n \cdot \Mm_{i=1}^{n}(m_i,\lambda_i).
}
Then, after dividing both sides of this inequality by $\Lambda_n$, we arrive at \eq{KI}.
For the sake of convenience let us rewrite \eq{E:IND0} into the following equivalent form
\Eq{E:IND}{
\frac{\Lambda_{j-1}}{\Lambda_j} \cdot \Mm_{i=1}^{j-1}(m_i,\lambda_i) + \frac{\lambda_j}{\Lambda_j} \cdot \Mm_{i=1}^{j}(x_i,\lambda_i) \le \Mm_{i=1}^{j}(m_i,\lambda_i).
}
To prove this, we will define a $\Q$-simple function $f: [0,\Lambda_j)^2 \to \R_+$ such that respective sides of the inequality \eqref{E:JF} and \eqref{E:IND} coincide with each other. This will complete the proof of this theorem.

Consider a partition of the domain of $f$ into the blocks $B_k:=[0,\Lambda_{j-1})\times [\Lambda_{k-1},\Lambda_k)$ and $C_k:=[\Lambda_{j-1},\Lambda_j)\times [\Lambda_{k-1},\Lambda_k)$, where $k\in\{1,\dots,j\}$. Now, based on Lemma~\ref{L:theta}, let $H_k$ be a fixed $\frac{\lambda_j \Lambda_{k-1}}{\lambda_k \Lambda_{j-1}}$-proportional subset of the block $B_k$ for all $k\in\{1,\dots,j\}$ and define
\begin{align*}
f(x,y)&:= \begin{cases} 
    m_{k-1} &\text{ for } (x,y) \in H_k,\quad k=2,\dots,j; \\
    m_{k} &\text{ for } (x,y) \in B_k \setminus H_k,\quad k=1,\dots,j-1; \\
    x_k &\text{ for } (x,y) \in C_k,\quad k=1,\dots,j.
       \end{cases}
 \end{align*}
 
To verify the correctness of this definition, we need to check  $\frac{\lambda_j \Lambda_{k-1}}{\lambda_k \Lambda_{j-1}}\leq1$ for $k\in\{1,\dots,j\}$. An elementary calculation shows that this inequality holds if and only if  $\lambda_k /\Lambda_k \ge \lambda_j / \Lambda_j$, what is provided by the assumption on the weight vector $\lambda$.

Fix $x_0 \in [0,\Lambda_{j-1})$. By the construction of $f$, we have that, for $k\in\{1,\dots,j-1\}$, $f(x_0,y)=m_k$ if $(x_0,y) \in (B_k \setminus H_k)\cup H_{k+1}$. On the other hand,
\begin{align*}
 |\big\{y \colon (x_0,y) \in (B_k \setminus H_k) \cup H_{k+1}\big\}|
 &= \lambda_k\bigg(1-\frac{\lambda_j \Lambda_{k-1}}{\lambda_k \Lambda_{j-1}}\bigg)+\lambda_{k+1} \frac{\lambda_j \Lambda_k}{\lambda_{k+1} \Lambda_{j-1}}\\
 &=\frac{\lambda_k \Lambda_{j-1}-\lambda_j \Lambda_{k-1}+\lambda_j \Lambda_k}{\Lambda_{j-1}} 
 =\frac{\lambda_k \Lambda_{j-1}+\lambda_j \lambda_k}{\Lambda_{j-1}} 
 =\frac{\Lambda_j}{\Lambda_{j-1}} \lambda_k.
\end{align*}
Then, by the symmetry of $\M$ and the definition of the $\M$-integral, for all $x_0 \in [0,\Lambda_{j-1})$, we have
\Eq{*}{
\Mm f(x_0,y)\:dy
=\Mm_{k=1}^{j-1}\Big(m_k,\frac{\Lambda_j}{\Lambda_{j-1}} \lambda_k \Big)
=\Mm_{k=1}^{j-1}(m_k,\lambda_k).
}
For $x_0 \in [\Lambda_{j-1},\Lambda_j)$, we simply get 
\Eq{*}{
\Mm f(x_0,y)\:dy=\Mm_{k=1}^{j}(x_k,\lambda_k ).
}
We can now calculate the weighted arithmetic mean with respect to $x$ and obtain
\Eq{*}{
\Ar\Big(\Mm f(x,y)\:dy\Big)dx
=\frac{\Lambda_{j-1}}{\Lambda_j} \cdot\Mm_{k=1}^{j-1}(m_k,\lambda_k) + \frac{\lambda_j}{\Lambda_j}  \cdot \Mm_{k=1}^{j}(x_k,\lambda_k).
}
This proves that the left hand sides of \eqref{E:IND} and \eqref{E:JF} are equal to each other. 

Finally, we shall prove that it is also the case for the right hand sides. For, it suffices to prove that
\Eq{E2}{
\Ar f(x,y_0)\:dx=m_i,\qquad y_0 \in [\Lambda_{i-1},\Lambda_i),\quad i\in\{1,\dots,j\}.
}

For $y_0 \in [\Lambda_{0},\Lambda_1)$, this equality is the consequence of the trivial equality $m_1=x_1$.
For $k\in\{2,\dots,j\}$ and $y_0 \in [\Lambda_{k-1},\Lambda_k)$, we have that $f(x,y_0)$ equals $m_{k-1}$, $m_k$, or $x_k$ on $H_k$, $B_k \setminus H_k$, and $C_k$, respectively. But by the proportionality property of $H_k$, we know that
\Eq{*}{
|\{x\colon (x,y_0)\in H_k\}|=\frac{\lambda_j \Lambda_{k-1}}{\lambda_k \Lambda_{j-1}} \cdot |\{x\colon (x,y_0)\in B_k\}|
=\frac{\lambda_j \Lambda_{k-1}}{\lambda_k}.
}
Therefore,
\Eq{*}{
|\{x\colon (x,y_0)\in B_k \setminus H_k\}|=\Lambda_{j-1}-\frac{\lambda_j \Lambda_{k-1}}{\lambda_k},
}
and we also have
\Eq{*}{
|\{x\colon (x,y_0)\in C_k\}|=\lambda_j.
}
Obviously the total length of the slice $\{x\colon (x,y_0)\in B_k \cup C_k\}$ equals $\Lambda_j$. Using this and the easy-to-see identity $x_k=(\Lambda_km_k-\Lambda_{k-1}m_{k-1})/\lambda_k$, we get
\Eq{*}{
\Ar f(x,y_0)\:dx&=\frac{1}{\Lambda_j} \bigg( \frac{\lambda_j\Lambda_{k-1}}{\lambda_k} m_{k-1} + \Big( \Lambda_{j-1}- \frac{\lambda_j\Lambda_{k-1}}{\lambda_k} \Big) m_k + \lambda_j \cdot \frac{\Lambda_km_k-\Lambda_{k-1}m_{k-1}}{\lambda_k}\bigg)\\
&=\frac{m_k}{\Lambda_j\lambda_k} \big( \Lambda_{j-1}\lambda_k- \lambda_j \Lambda_{k-1}+\lambda_j\Lambda_k\big)
=\frac{m_k}{\Lambda_j\lambda_k} \big(\Lambda_{j-1}\lambda_k+ \lambda_j \lambda_k\big)=m_k.
}
Therefore, the corresponding sides of \eq{E:IND} and \eq{E:JF} coincide. As the Jensen concavity of $\M$ implies the Jensen--Fubini inequality \eq{E:JF}, we obtain \eq{E:IND}, and hence \eq{E:IND0} and, finally, the desired inequality \eq{KI}.

The last assertion of the theorem can be obtained by the transformation $\M\mapsto\widehat\M$ defined in \eq{hat}.
\end{proof}

We have two immediate corollaries.

\begin{cor} 
\label{cor:main1}
Let $\lambda \in V(\Q)$ and let $\M\colon\bigcup_{n=1}^\infty I^n\times W_n(\Q) \to I$ be a symmetric and Jensen concave $\Q$-weighted mean on $I$. Then $\M$ satisfies the $\lambda$-weighted Kedlaya inequality \eq{KI}. 

On the other hand, if $\M$ is a symmetric and Jensen convex $\Q$-weighted mean on $I$, then \eq{KI} holds with reversed inequality for all $n\in\N$. 
\end{cor}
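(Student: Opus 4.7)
The plan is to derive the corollary directly from Theorem~\ref{thm:main} by a simple quantifier-switch argument. First I would fix an arbitrary $n\in\N$. Since $\lambda\in V(\Q)$, the full sequence $\big(\tfrac{\lambda_i}{\lambda_1+\cdots+\lambda_i}\big)_{i=1}^\infty$ is nonincreasing, and so is any initial segment $\big(\tfrac{\lambda_i}{\lambda_1+\cdots+\lambda_i}\big)_{i=1}^n$. Combined with the conditions $\lambda_1>0$ and $\lambda_2,\lambda_3,\dots\ge 0$, this shows that the truncation $(\lambda_1,\dots,\lambda_n)$ lies in $V_n(\Q)$. Theorem~\ref{thm:main} applied to this truncation then yields the $(n,\lambda)$-Kedlaya inequality \eq{KI} for the symmetric Jensen concave mean $\M$. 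Since $n$ was arbitrary, $\M$ satisfies the $\lambda$-weighted Kedlaya inequality by the very definition of the latter.

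For the Jensen convex case the reasoning is identical: the reversed-inequality clause of Theorem~\ref{thm:main} gives the reversed form of \eq{KI} for each $n$. Alternatively, one can invoke the transformation $\M\mapsto\widehat\M$ from \eq{hat}, which converts a symmetric Jensen convex mean on $I$ into a symmetric Jensen concave mean on $-I$, apply the first part, and translate the conclusion back through the definition of $\widehat\M$. No step here presents a real obstacle --- essentially all of the work is already packaged in Theorem~\ref{thm:main}, and the only ingredient to add is the trivial observation that the truncation to the first $n$ coordinates maps $V(\Q)$ into $V_n(\Q)$.
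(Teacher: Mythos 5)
Your argument is correct and is exactly the intended derivation: the paper presents this as an immediate consequence of Theorem~\ref{thm:main}, and your explicit observation that truncating a sequence in $V(\Q)$ to its first $n$ terms lands in $V_n(\Q)$ (the partial sums being positive since $\lambda_1>0$) is the only ingredient needed. The convex case follows in the same way from the second clause of Theorem~\ref{thm:main}, as you note.
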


Taking the constant sequence $\lambda_n=1$ in the above corollary, we arrive at a statement which was one of the main results of the paper \cite{PalPas16}.

\begin{cor} 
\label{cor:main2}
Let $\M:\bigcup_{n=1}^\infty I^n\to I$ be a symmetric and Jensen concave repetition invariant mean on $I$. Then $\M$ satisfies the discrete Kedlaya inequality \eq{DKI} for all $n\in\N$ and $x\in I^n$.

On the other hand, if $\M$ is a symmetric and Jensen convex repetition invariant mean on $I$, then \eq{DKI} holds with reversed inequality for all $n\in\N$ and $x\in I^n$. 
\end{cor}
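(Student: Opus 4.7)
The plan is to lift $\M$ to a $\Q$-weighted mean by the constructions already developed in the paper and then read the statement off Corollary~\ref{cor:main1} applied with the constant weight sequence $\lambda_i\equiv 1$.

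Starting from a symmetric, Jensen concave, repetition invariant mean $\M$ on $I$, Theorem~\ref{thm:weighted_nonweighted} produces a $\Z$-weighted mean via \eq{WeiQDef}, which is symmetric and Jensen concave by Theorems~\ref{thm:sym} and~\ref{thm:conc}. Applying Theorem~\ref{thm:fieldex} (with $R=\Z$, so $\Quot(R)=\Q$) then yields a $\Q$-weighted extension $\widetilde\M$; symmetry is preserved by the moreover-part of Theorem~\ref{thm:fieldex}, and Jensen concavity carries over directly from the definition \eq{E:deftildeM}: for a fixed $\lambda\in W_n(\Q)$ we pick $q\in\N$ with $q\lambda\in W_n(\Z)$ and apply the $\Z$-weighted Jensen concavity against the common weight vector $q\lambda$ to the three inputs $x$, $y$, and $(x+y)/2$.

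Now take $\lambda_i:=1$ for every $i\in\N$. Its partial-sum ratios $\lambda_k/\Lambda_k=1/k$ form a strictly decreasing sequence, so $\lambda\in V(\Q)$. By the construction of $\widetilde\M$, for every $k\in\N$ and every $x\in I^k$ we have
\begin{equation*}
\widetilde\M\bigl((x_1,\dots,x_k),(1,\dots,1)\bigr) = \M(x_1,\dots,x_k),
\end{equation*}
and the same identity holds for the weighted arithmetic mean. Substituting these into the $\lambda$-weighted Kedlaya inequality \eq{KI} supplied by Corollary~\ref{cor:main1} yields precisely the discrete Kedlaya inequality \eq{DKI}. The Jensen convex case follows identically from the second half of Corollary~\ref{cor:main1}, or equivalently via the sign-reversal trick $\M\mapsto\widehat\M$ of \eq{hat}.

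There is no substantial obstacle: the whole argument is a specialization of the weighted result to constant weights. The only point deserving an explicit line of verification, as sketched above, is that Jensen concavity is transferred by the extension of Theorem~\ref{thm:fieldex}, something the paper does not state as a separate lemma but which is immediate from the scaling formula \eq{E:deftildeM}.
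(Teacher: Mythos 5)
Your proposal is correct and follows exactly the route the paper intends: the paper derives this corollary in one line by taking the constant sequence $\lambda_n=1$ in Corollary~\ref{cor:main1}, implicitly relying on the passage from a repetition invariant mean to a $\Q$-weighted one via Theorems~\ref{thm:weighted_nonweighted}, \ref{thm:sym}, \ref{thm:conc} and \ref{thm:fieldex}. Your explicit verification that Jensen concavity survives the $\Quot(R)$-extension of Theorem~\ref{thm:fieldex} (which the paper's ``moreover'' clause only states for symmetry and monotonicity) is a worthwhile detail that the paper leaves tacit.
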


In our subsequent result we demonstrate the assumption that $\big(\frac{\lambda_i}{\lambda_1+\cdots+\lambda_i}\big)_{i=1}^{n}$ is nonincreasing is not only a technical condition but, in some sense, it is an unavoidable condition.

\begin{thm} 
\label{thm:monotone}
Let $R\subset\R$ be a subring, $n\geq2$ and let $\lambda \in W_n^0(R)$ be a fixed sequence. Let $\M\colon\bigcup_{m=1}^n [0,\infty)^m\times W_m(R) \to [0,\infty)$ be a homogeneous function with following properties:
\begin{enumerate}[(i)]
 \item $\M((0,\dots,0,1),(\lambda_1,\dots,\lambda_{n-1}))=1$ and $\M((0,\dots,0,1),\lambda)=1$;
 \item the mapping $x\mapsto \mu(x):=\M((0,\dots,0,x,1),\lambda)$ is differentiable at $x=0$ with $\mu'(0)<0$. 
\end{enumerate}
Assume that $\M$ satisfies the $(n,\lambda)$-weighted Kedlaya inequality \eq{KI} with reversed inequality sign. 
Then 
\Eq{Lambda}{
  \frac{\lambda_{n-1}}{\lambda_1+\cdots+\lambda_{n-1}}
  \geq \frac{\lambda_n}{\lambda_1+\cdots+\lambda_n}.
}
\end{thm}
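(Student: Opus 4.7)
The plan is to test the assumed reversed $(n,\lambda)$-Kedlaya inequality on the simple two-parameter vector $x=(0,\dots,0,t,1)\in[0,\infty)^n$ (with the real parameter $t\ge 0$ in position $n-1$ and $1$ in position $n$), and then to differentiate the resulting scalar inequality at $t=0^+$.

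First I would reduce each side of \eqref{KI} to an explicit expression in $t$. Homogeneity combined with property~(i) gives $\Mm_{i=1}^k(x_i,\lambda_i)=0$ for $k\le n-2$, $\Mm_{i=1}^{n-1}(x_i,\lambda_i)=t\cdot\M\bigl((0,\dots,0,1),(\lambda_1,\dots,\lambda_{n-1})\bigr)=t$, and $\Mm_{i=1}^{n}(x_i,\lambda_i)=\mu(t)$. The partial arithmetic means vanish for $k\le n-2$ and equal $\tfrac{\lambda_{n-1}t}{\Lambda_{n-1}}$ for $k=n-1$ and $\tfrac{\lambda_{n-1}t+\lambda_n}{\Lambda_n}$ for $k=n$. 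The outer arithmetic mean on the left-hand side of~\eqref{KI} therefore collapses to $\tfrac{\lambda_{n-1}t+\lambda_n\mu(t)}{\Lambda_n}$. On the right-hand side, a second use of homogeneity pulls out the factor $\tfrac{\lambda_{n-1}t+\lambda_n}{\Lambda_n}$, producing $\tfrac{\lambda_{n-1}t+\lambda_n}{\Lambda_n}\cdot\mu(r(t))$ with $r(t):=\frac{\Lambda_n\lambda_{n-1}t}{\Lambda_{n-1}(\lambda_{n-1}t+\lambda_n)}$. The reversed Kedlaya hypothesis is then equivalent to the scalar inequality
\[
\lambda_{n-1}t+\lambda_n\mu(t)\;\ge\;(\lambda_{n-1}t+\lambda_n)\,\mu\bigl(r(t)\bigr) \qquad (t\ge 0).
\]

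Both sides equal $\lambda_n$ at $t=0$, so setting $F(t)$ equal to the difference (LHS $-$ RHS) I have $F(0)=0$ and $F(t)\ge 0$ for $t\ge 0$ near $0$; hence $F'(0^+)\ge 0$. Differentiating and using $\mu(0)=1$ together with $r'(0)=\tfrac{\Lambda_n\lambda_{n-1}}{\Lambda_{n-1}\lambda_n}$ gives
\[
F'(0)=\mu'(0)\left(\lambda_n-\frac{\Lambda_n\lambda_{n-1}}{\Lambda_{n-1}}\right).
\]
Since $\mu'(0)<0$ by~(ii), the inequality $F'(0)\ge 0$ forces $\lambda_n\Lambda_{n-1}\le\lambda_{n-1}\Lambda_n$, which is exactly~\eqref{Lambda}.

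The main conceptual step is the choice of test vector, which concentrates all the nontrivial behavior in the last two coordinates and degenerates the $n$-variable Kedlaya inequality to a one-parameter statement about $\mu$. I do not expect a serious obstacle: the only points needing care are bookkeeping the two uses of homogeneity (one for the inner mean $\Mm_{i=1}^{n-1}$, one for the outer $\M$ on the right of~\eqref{KI}) and the one-sided differentiation at $t=0^+$. The latter is legitimate because the entry domain of $\M$ is $[0,\infty)$ (only weights are required to lie in $R$), so the scalar inequality genuinely holds on an interval $t\in[0,\varepsilon)\subset\R$.
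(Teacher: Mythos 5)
Your proposal is correct and follows essentially the same route as the paper: the same test vector $x=(0,\dots,0,t,1)$, the same two uses of homogeneity to reduce the reversed Kedlaya inequality to $\lambda_{n-1}t+\lambda_n\mu(t)\ge(\lambda_{n-1}t+\lambda_n)\mu(r(t))$, and the same conclusion from the first-order behaviour at $t=0$ together with $\mu'(0)<0$. The only cosmetic difference is that you phrase the last step as $F'(0^+)\ge0$ for the difference $F$, whereas the paper subtracts $\lambda_n$, divides by $t$, and passes to the limit — the same computation.
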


\begin{proof} Substituting $x_1=\dots=x_{n-2}=0$ and $x_{n-1}:=x$, $x_n:=1$ into inequality \eq{KI} with reversed inequality sign, then using property (i) of $\M$, we get
\Eq{*}{
  \M\bigg(\Big(0,\dots,0,\frac{\lambda_{n-1}x}{\Lambda_{n-1}},\frac{\lambda_{n-1}x+\lambda_n}{\Lambda_n}\Big),\lambda\bigg)
  \leq \frac{\lambda_{n-1}x+\lambda_n\M((0,\dots,0,x,1),\lambda)}{\Lambda_n}.
}
Therefore, by the homogeneity of $\M$, 
\Eq{*}{
  (\lambda_{n-1}x+\lambda_n)\M\bigg(\Big(0,\dots,0,\frac{\lambda_{n-1}\Lambda_nx}{\Lambda_{n-1}(\lambda_{n-1}x+\lambda_n)},1\Big),\lambda\bigg)
  \leq \lambda_{n-1}x+\lambda_n\M((0,\dots,0,x,1),\lambda),
}
which, using the notation in (ii), can be rewritten as
\Eq{*}{
  (\lambda_{n-1}x+\lambda_n)\mu\bigg(\frac{\lambda_{n-1}\Lambda_nx}{\Lambda_{n-1}(\lambda_{n-1}x+\lambda_n)}\bigg)
  \leq \lambda_{n-1}x+\lambda_n\mu(x)
}
By the second condition of (i), we have that $\mu(0)=1$. Therefore, subtracting $\lambda_n$ and then dividing by $x$ side by side, we get 
\Eq{*}{
  \lambda_{n-1}\mu\bigg(\frac{\lambda_{n-1}\Lambda_nx}{\Lambda_{n-1}(\lambda_{n-1}x+\lambda_n)}\bigg)
  +\frac{\lambda_n}{x}\bigg(\mu\bigg(\frac{\lambda_{n-1}\Lambda_nx}{\Lambda_{n-1}(\lambda_{n-1}x+\lambda_n)}\bigg)-1\bigg)
  \leq \lambda_{n-1}+\lambda_n\frac{\mu(x)-\mu(0)}x
}
Upon taking the limit $x\to0$, using the differentiability of $\mu$ at $0$, we arrive at
\Eq{*}{
  \lambda_{n-1}+\frac{\lambda_{n-1}\Lambda_n}{\Lambda_{n-1}}\mu'(0)\leq \lambda_{n-1}+\lambda_n\mu'(0).
}
Now, using $\mu'(0)<0$, we obtain that \eq{Lambda} holds true.
\end{proof}

The following result is an immediate consequence of the latter theorem.

\begin{cor} 
\label{cor:monotone}
Let $R\subset\R$ be a subring and let $\lambda \in W^0(R)$ be a fixed sequence. Let $\M\colon\bigcup_{n=1}^\infty [0,\infty)^n\times W_n(R) \to [0,\infty)$ be a homogeneous function with following properties:
\begin{enumerate}[(i)]
 \item for all $n\in\N$, $\M((0,\dots,0,1),(\lambda_1,\dots\lambda_n))=1$;
 \item for all $n\geq2$, the mapping $x\mapsto \mu_n(x):=\M((0,\dots,0,x,1),(\lambda_1,\dots,\lambda_n))$ is differentiable at $x=0$ with $\mu_n'(0)<0$. 
\end{enumerate}
Assume that $\M$ satisfies the $\lambda$-weighted Kedlaya inequality \eq{KI} with reversed inequality sign. Then the sequence $\big(\frac{\lambda_n}{\lambda_1+\cdots+\lambda_n}\big)_{n=1}^\infty$ is nonincreasing, that is $\lambda\in V(R)$.
\end{cor}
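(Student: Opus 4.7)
The plan is to obtain Corollary~\ref{cor:monotone} as an immediate pointwise application of Theorem~\ref{thm:monotone}, iterated over all indices $n\geq 2$. The content of the corollary is only that the \emph{whole} sequence $\big(\frac{\lambda_n}{\Lambda_n}\big)_{n=1}^{\infty}$ is nonincreasing, whereas the theorem gives the single inequality $\frac{\lambda_{n-1}}{\Lambda_{n-1}}\geq\frac{\lambda_n}{\Lambda_n}$ from the $(n,\lambda|_n)$-weighted Kedlaya inequality. So all that has to be done is, for each fixed $n\geq 2$, to package the data provided by the corollary into a form to which the theorem directly applies.

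First I would fix $n\geq 2$ and consider the truncated weight vector $\lambda|_n:=(\lambda_1,\dots,\lambda_n)\in W_n^0(R)$; since $\lambda\in W^0(R)$, this lies in $W_n^0(R)$. I would then verify the three hypotheses of Theorem~\ref{thm:monotone} for the pair $(\M,\lambda|_n)$: homogeneity of $\M$ is assumed globally; condition (i) of the theorem requires $\M((0,\dots,0,1),(\lambda_1,\dots,\lambda_{n-1}))=1$ and $\M((0,\dots,0,1),(\lambda_1,\dots,\lambda_n))=1$, both of which are instances of hypothesis (i) of the corollary applied at indices $n-1$ and $n$; condition (ii) asks that $x\mapsto\M((0,\dots,0,x,1),\lambda|_n)$ be differentiable at $0$ with negative derivative, and this is exactly the function $\mu_n$ from hypothesis (ii) of the corollary. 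Finally, the assumption that $\M$ satisfies the $\lambda$-Kedlaya inequality with reversed sign for all $n\in\N$ specializes to the reversed $(n,\lambda|_n)$-Kedlaya inequality.

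Theorem~\ref{thm:monotone} then yields $\frac{\lambda_{n-1}}{\Lambda_{n-1}}\geq\frac{\lambda_n}{\Lambda_n}$. Since $n\geq 2$ was arbitrary, the sequence $\big(\frac{\lambda_n}{\Lambda_n}\big)_{n=1}^{\infty}$ is nonincreasing, i.e.\ $\lambda\in V(R)$, which is the claim. No obstacle is expected: the only subtlety is to notice that the corollary's hypotheses at index $n$ and $n-1$ supply precisely the ingredients required to invoke the theorem at parameter $n$, and that the reversed $\lambda$-Kedlaya inequality, valid for every $n$, restricts cleanly to the reversed $(n,\lambda|_n)$-Kedlaya inequality because the left-hand and right-hand sides of \eq{KI} at size $n$ depend only on $x_1,\dots,x_n$ and $\lambda_1,\dots,\lambda_n$.
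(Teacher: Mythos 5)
Your proposal is correct and matches the paper's intent exactly: the paper gives no separate proof, stating only that the corollary is an immediate consequence of Theorem~\ref{thm:monotone}, and your argument is precisely the iteration of that theorem over each $n\geq 2$ with the truncated weights $(\lambda_1,\dots,\lambda_n)$. All the bookkeeping you carry out (matching hypothesis (i) at indices $n-1$ and $n$, identifying $\mu_n$ with the theorem's $\mu$, and restricting the reversed $\lambda$-Kedlaya inequality to its $n$-variable instance) is exactly what the paper leaves implicit.
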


\begin{exa}
\label{exa:G12}
In this example we construct a homogeneous Jensen convex symmetric mean $\M$ such that, for a positive sequence $\lambda\in W(\Q)$, the reversed $\lambda$-Kedlaya inequality can hold if and only if the sequence $\big(\frac{\lambda_n}{\lambda_1+\cdots+\lambda_n}\big)_{n=1}^\infty$ is nonincreasing. This shows that the latter condition is not a technical one, but it is indispensable.

Consider the function $\M\colon\bigcup_{n=1}^\infty [0,\infty)^n\times W_n(\R) \to [0,\infty)$ defined by
\Eq{*}{
  \M((x_1,\dots,x_n),(\lambda_1,\dots,\lambda_n))
  :=\begin{cases}
     \frac{\lambda_1x_1^2+\cdots+\lambda_nx_n^2}{\lambda_1x_1+\cdots+\lambda_nx_n} &\mbox{if } \lambda_1x_1+\cdots+\lambda_nx_n>0,\\
     0 &\mbox{if } \lambda_1x_1+\cdots+\lambda_nx_n=0.\\
    \end{cases}
}
We first show that $\M$ is a homogeneous Jensen convex symmetric mean. The homogeneity and symmetry are obvious. For the proof of the Jensen convexity, let $x,y\in[0,\infty)^n$ and $\lambda\in W_n(\R)$. We have to verify that
\Eq{JC}{
   \M\Big(\frac{x+y}{2},\lambda\Big)\leq \frac12\big(\M(x,\lambda)+\M(y,\lambda)\big).
}
If the left hand side is zero, then there is nothing to prove. In the other case, by the definition of the mean, we have that $\lambda_1(x_1+y_1)+\cdots+\lambda_n(x_n+y_n)>0$. If $\lambda_1x_1+\cdots+\lambda_nx_n=0$, then $\M(x,\lambda)=0$ and, for all $i\in\{1,\dots,n\}$, we have that $\lambda_i x_i=0$. Therefore,
\Eq{*}{
  \M\Big(\frac{x+y}{2},\lambda\Big)
  &=\frac{\lambda_1(x_1+y_1)^2+\cdots+\lambda_n(x_n+y_n)^2}{2(\lambda_1(x_1+y_1)+\cdots+\lambda_n(x_n+y_n))}\\
  &=\frac{\lambda_1y_1^2+\cdots+\lambda_ny_n^2}{2(\lambda_1y_1+\cdots+\lambda_ny_n)}
   =\frac12\big(\M(x,\lambda)+\M(y,\lambda)\big).
}
In the other subcase $\lambda_1y_1+\cdots+\lambda_ny_n=0$, a completely analogous argument yields that \eq{JC} is valid, too.
Therefore, in the rest of the proof of the Jensen convexity, we may assume that $\lambda_1x_1+\cdots+\lambda_nx_n>0$ and $\lambda_1y_1+\cdots+\lambda_ny_n>0$. Denote $\M(x,\lambda)$ and $\M(y,\lambda)$ by $u$ and $v$, respectively. 
Then, it follows from the definition of the mean $\M$ that
\Eq{*}{
  \sum_{i=1}^n\lambda_i\Big(\Big(\frac{x_i}{u}\Big)^2-\frac{x_i}{u}\Big)=0, \qquad
  \sum_{i=1}^n\lambda_i\Big(\Big(\frac{y_i}{v}\Big)^2-\frac{y_i}{v}\Big)=0.
}
Now, using the convexity of the function $x\mapsto x^2-x$, we get that
\Eq{*}{
 0&=\frac{u}{u+v}\sum_{i=1}^n\lambda_i\Big(\Big(\frac{x_i}{u}\Big)^2-\frac{x_i}{u}\Big)
   +\frac{v}{u+v}\sum_{i=1}^n\lambda_i\Big(\Big(\frac{y_i}{v}\Big)^2-\frac{y_i}{v}\Big)\\
  &=\sum_{i=1}^n\lambda_i\bigg(\frac{u}{u+v}\Big(\Big(\frac{x_i}{u}\Big)^2-\frac{x_i}{u}\Big)
     +\frac{v}{u+v}\Big(\Big(\frac{y_i}{v}\Big)^2-\frac{y_i}{v}\Big)\bigg)\\
  &\geq \sum_{i=1}^n\lambda_i \Big(\Big(\frac{u}{u+v}\frac{x_i}{u}+\frac{v}{u+v}\frac{y_i}{v}\Big)^2
      -\Big(\frac{u}{u+v}\frac{x_i}{u}+\frac{v}{u+v}\frac{y_i}{v}\Big)\Big)\\  
  &= \sum_{i=1}^n\lambda_i \Big(\Big(\frac{x_i+y_i}{u+v}\Big)^2-\frac{x_i+y_i}{u+v}\Big). 
}
After a simple calculation, this inequality implies that 
\Eq{*}{
  \M\Big(\frac{x+y}{2},\lambda\Big)\leq \frac{u+v}{2},
}
which is equivalent to the inequality \eq{JC}.

Let $\lambda\in W(\Q)$ be any sequence with positive terms. We show that the reversed $\lambda$-Kedlaya inequality \eq{KI} is satisfied by $\M$ if and only if $\lambda\in V(\Q)$.

In view of the symmetry and the Jensen convexity of $\M$, if $\lambda\in V(\Q)$, then, by Theorem~\ref{thm:main}, $\M$ fulfills the reversed $\lambda$-Kedlaya inequality \eq{KI}.

On the other hand, assume that $\M$ satisfies the reversed $\lambda$-Kedlaya inequality \eq{KI}. In order to obtain that $\lambda\in V(\Q)$, by Corollary~\ref{cor:monotone} it suffices to verify that $\M$ satisfies conditions (i) and (ii) of this result. Condition (i) is trivially valid. To see that (ii) also holds, observe that
\Eq{*}{
  \mu_n(x)=\frac{\lambda_{n-1}x^2+\lambda_n}{\lambda_{n-1}x+\lambda_n} \qquad(x\geq0).
}
Then
\Eq{*}{
  \mu'(0)=-\frac{\lambda_{n-1}}{\lambda_n}<0.
}
Thus, by  Corollary~\ref{cor:monotone}, the sequence $\big(\frac{\lambda_n}{\lambda_1+\cdots+\lambda_n}\big)_{n=1}^\infty$ must be nonincreasing, i.e., $\lambda\in V(\Q)$ should be valid.
\end{exa}

At the very end of this section let us emphasize that the Kedlaya property is stable under affine transformations of means. More precisely we can establish the following simple lemma.
\begin{lem}
\label{lem:affine}
Let $I$ be an interval $R$ be a ring, $n\in \N$ and $\lambda\in W_n^0(R)$. Let $a,b\in\R$ with $a\ne0$. If an $R$-weighed mean $\M$ on $I$ satisfies the $(n,\lambda)$-Kedlaya inequality \eq{KI} and $a>0$, then this inequality is also satisfied by the mean $\M_{a,b}\colon \bigcup_{k=1}^\infty (a I+b)^k \times W_k(R)\to aI+b$ defined by 
\Eq{*}{
 \M_{a,b}(x,\mu):=a\cdot \M\Big(\Big(\frac{x_1-b}{a},\dots,\frac{x_k-b}{a}\Big),\mu\Big)+b,\qquad (k\in\N,\, (x,\mu)\in(aI+b)^k \times W_k(R)).
}
If $a<0$ then the sign in the inequality \eq{KI} is reversed.
\end{lem}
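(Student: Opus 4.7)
The plan is to reduce the $(n,\lambda)$-Kedlaya inequality for $\M_{a,b}$ on $aI+b$ directly to the one for $\M$ on $I$ via the affine substitution $y_i=ax_i+b$. The whole proof hinges on the fact that both the weighted arithmetic mean $\A$ and, by its very construction, the mean $\M_{a,b}$ are equivariant with respect to the affine map $\varphi(t):=at+b$.

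First I would record that $\M_{a,b}$ is indeed an $R$-weighted mean on $aI+b$: nullhomogeneity in the weights, the reduction principle, and the elimination principle are inherited verbatim from $\M$, because $\varphi$ acts only on entries and not on weights; the mean value property follows from the strict monotonicity of $\varphi$, regardless of the sign of $a$. The core ingredients of the argument are the two equivariance identities
\[
\Ar_{i=1}^k\bigl(\varphi(x_i),\mu_i\bigr) = \varphi\Bigl(\Ar_{i=1}^k(x_i,\mu_i)\Bigr),
\qquad
\M_{a,b}\bigl((\varphi(x_1),\dots,\varphi(x_k)),\mu\bigr) = \varphi\Bigl(\M(x,\mu)\Bigr),
\]
valid for all $k\in\N$ and $(x,\mu)\in I^k\times W_k(R)$. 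The first is immediate from the explicit formula for the weighted arithmetic mean, the second is nothing but the definition of $\M_{a,b}$.

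Now, given $y\in(aI+b)^n$, set $x_i:=(y_i-b)/a\in I$ and apply the equivariance identities to each partial mean. This shows that the inner arguments appearing inside the outer means on both sides of \eq{KI} written for $\M_{a,b}$ at $y$ are precisely the $\varphi$-images of the corresponding partial means for $\M$ at $x$. A second application of the same identities to the outer means yields that each side of \eq{KI} for $\M_{a,b}$ at $y$ equals $\varphi$ applied to the corresponding side of \eq{KI} for $\M$ at $x$. Since $\varphi$ is strictly increasing when $a>0$ and strictly decreasing when $a<0$, the inequality for $\M$ translates into the same inequality for $\M_{a,b}$ in the first case and into the reversed inequality in the second.

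There is no serious obstacle here; the only point that requires a moment of care is to peel off $\varphi$ cleanly at both the inner and outer nesting levels without confusing the two, which is straightforward once the equivariance identities above are in hand.
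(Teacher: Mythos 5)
Your proof is correct; the paper states this lemma without proof (calling it a ``simple lemma''), and your argument via the affine equivariance of $\A$ and of $\M_{a,b}$ under $\varphi(t)=at+b$, applied once at the inner and once at the outer level of \eq{KI}, is precisely the intended routine verification. The only detail worth keeping explicit is the one you already noted: for $a<0$ the map $\varphi$ is decreasing, which both preserves the mean value property of $\M_{a,b}$ (since it swaps $\min$ and $\max$) and reverses the inequality.
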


We note that, similar invariance property holds concerning Jensen convexity and concavity of means.


From now on, we will extensively use Proposition~\ref{prop:main_R}. To make the notation easier let us define, for every $n \in \N$, 
\begin{enumerate}[(a)]
\item $\WQ_n$ to be the set of all $\lambda \in W_n^0(\Q)$ such that the $(n,\lambda)$-Kedlaya inequality is satisfied for every symmetric and Jensen concave $\Q$-weighted mean;
 \item $\WR_n$ to be the set of all $\lambda \in W_n^0(\R)$ such that the $(n,\lambda)$-Kedlaya inequality is satisfied for every symmetric and Jensen concave $\R$-weighted mean which is continuous in the weights.
\end{enumerate}
It is quite easy to observe that this property does not depend on the selection of the domain (cf. \cite{Pas16a}).
It will be mostly used to distinguish the (technical) assumptions of Theorem~\ref{thm:main} and the requirements to the family of means. In fact requirements of Jensen concavity of mean and its symmetry were taken just to provide assumption of these two results to be satisfied. In fact, each collection of constraints leads us to an analogous family of sets. 

Some properties of these sets are implied just by their definition. For example as an immediate result of continuity in weights, we get that $\WR_n$ is a closed subset of $W_n^0(\R)$. Furthermore, the nullhomogeneity in the weights implies that $\WR_n$ is a cone, that is, $c \lambda \in \WR_n$ for all $c>0$ and $\lambda \in \WR_n$. 

Having this notations already introduced, Theorem~\ref{thm:main} and Example~\ref{exa:G12} imply
\Eq{inclQn}{
V_n(\Q)\subseteq\WQ_n \subseteq 
\Big\{\lambda \in W_n^0(\Q) \mathrel{\Big|} \frac{\lambda_{n-1}}{\lambda_1+\cdots+\lambda_{n-1}}\ge \frac{\lambda_{n}}{\lambda_1+\cdots+\lambda_{n}}\Big\}.
}
But $\WQ_n \subseteq \WR_n$ and $\WR_n$ is closed in $W_n^0(\R)$, therefore we obtain a generalization of Theorem~\ref{thm:Kedlaya99} to a broad family of $\R$-weighted means.

\begin{prop}
\label{prop:main_R}
For every $n \in \N$ with $n\geq2$, the following inclusions are valid
\Eq{inclRn}{
V_n(\R)\subseteq \WR_n \subseteq 
\Big\{\lambda \in W_n^0(\R) \mathrel{\Big|} \frac{\lambda_{n-1}}{\lambda_1+\cdots+\lambda_{n-1}}\ge \frac{\lambda_{n}}{\lambda_1+\cdots+\lambda_{n}}\Big\}.
}
\end{prop}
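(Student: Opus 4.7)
My plan has two parts. The first inclusion $V_n(\R)\subseteq\WR_n$ follows by density and closedness from the rational case, which has been settled by Theorem~\ref{thm:main}. The second inclusion follows from an application of Theorem~\ref{thm:monotone} to the Jensen convex mean of Example~\ref{exa:G12} after passing through the hat transformation. The three background facts I would rely on, all already in the excerpt, are: $V_n(\Q)\subseteq\WQ_n$ by Theorem~\ref{thm:main}; $\WQ_n\subseteq\WR_n$, because restricting any continuous-in-weights symmetric Jensen concave $\R$-weighted mean to rational weight vectors produces a symmetric Jensen concave $\Q$-weighted mean; and the closedness of $\WR_n$ in $W_n^0(\R)$, which comes from the continuity in the weights together with the fact that both sides of \eqref{KI} are continuous in $\lambda$.

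For the first inclusion the essential step is the density of $V_n(\Q)$ in $V_n(\R)$. I would reparametrize $V_n(\R)$ by $(\lambda_1,(\alpha_2,\dots,\alpha_n))$, where $\alpha_i:=\lambda_i/\Lambda_i$; since $\lambda_1>0$ we have $\alpha_1=1$, $\alpha_2,\dots,\alpha_n\in[0,1)$, and the monotonicity of the ratio sequence is just $\alpha_2\geq\dots\geq\alpha_n$. The weights are reconstructed from $\lambda_1$ and the $\alpha_i$ via the recursion $\Lambda_i=\Lambda_{i-1}/(1-\alpha_i)$, $\lambda_i=\alpha_i\Lambda_i$. Approximating $\lambda_1$ by a positive rational and $(\alpha_2,\dots,\alpha_n)$ by a nonincreasing sequence of rationals in $[0,1)$ yields a sequence $\lambda^{(k)}\in V_n(\Q)\subseteq\WR_n$ converging to $\lambda$, and the closedness of $\WR_n$ forces $\lambda\in\WR_n$.

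For the second inclusion I would take $\lambda\in\WR_n$ and test it against a continuous modification of the mean $G_{1,2}$ of Example~\ref{exa:G12}. The raw formula $(\sum\lambda_i x_i^2)/(\sum\lambda_i x_i)$ is not jointly continuous in $\lambda$ on all of $W_n(\R)$, so I would first smooth it via the affine shift $\M_\epsilon(x,\lambda):=G_{1,2}(x+\epsilon\mathbf{1},\lambda)-\epsilon=\frac{\sum\lambda_i(x_i+\epsilon)x_i}{\sum\lambda_i(x_i+\epsilon)}$ for small $\epsilon>0$; this is symmetric, Jensen convex (as an affine shift of $G_{1,2}$ in the $x$-argument), and continuous in the weights since its denominator is bounded below by $\epsilon\sum\lambda_i>0$. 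Taking the hat yields a symmetric Jensen concave continuous-in-weights $\R$-weighted mean on $(-\infty,0]$, and using the domain independence of $\WR_n$ noted in the excerpt, the hypothesis $\lambda\in\WR_n$ forces $\widehat{\M_\epsilon}$ to satisfy \eqref{KI} at $\lambda$, equivalently $\M_\epsilon$ satisfies the reversed $(n,\lambda)$-Kedlaya inequality. Letting $\epsilon\downarrow 0$, the pointwise convergence $\M_\epsilon\to G_{1,2}$ at the vectors $(0,\dots,0,x_{n-1},1)$ with $x_{n-1}>0$ transfers the reversed Kedlaya inequality to $G_{1,2}$ at those specific vectors. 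Since $G_{1,2}$ fulfils hypotheses (i) and (ii) of Theorem~\ref{thm:monotone} with $\mu'(0)=-\lambda_{n-1}/\lambda_n<0$, the argument of that theorem delivers $\lambda_{n-1}/\Lambda_{n-1}\geq\lambda_n/\Lambda_n$.

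The main obstacle is exactly this continuity subtlety in the second inclusion: the raw Example mean is not in the class used to define $\WR_n$, so the bridge via the continuous perturbation $\M_\epsilon$ and the $\epsilon\downarrow 0$ limit is the only nontrivial step there. Once the $\alpha$-parametrization is in hand, the density statement in the first inclusion is essentially immediate, and the rest of the argument is direct application of Theorems~\ref{thm:main} and~\ref{thm:monotone} together with the hat correspondence of \eqref{hat}.
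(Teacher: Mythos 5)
Your proof is correct and, for the first inclusion, it is essentially the paper's own argument: the paper also reparametrizes $V_n(\R)$ by the ratio sequence $(\lambda_i/\Lambda_i)$ (composed with one further change of variables to get a product domain), observes that rational parameters correspond to rational weight vectors, and concludes by the density of $V_n(\Q)$, the chain $V_n(\Q)\subseteq\WQ_n\subseteq\WR_n$, and the closedness of $\WR_n$. For the second inclusion the paper merely writes that it "is the consequence of Example~\ref{exa:G12}", and here your treatment is genuinely more careful: you correctly observe that the mean of Example~\ref{exa:G12} is \emph{not} continuous in the weights on all of $W_n(\R)$ (e.g.\ $\lambda_2\mapsto\M((0,1),(1,\lambda_2))$ jumps at $\lambda_2=0$), so neither it nor its hat-transform belongs to the class of means used to define $\WR_n$, and the paper's one-line justification has a real gap at this point. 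Your repair --- replacing the mean by the affinely shifted $\M_\epsilon(x,\lambda)=\sum\lambda_i(x_i+\epsilon)x_i/\sum\lambda_i(x_i+\epsilon)$, which is symmetric, Jensen convex and continuous in the weights, extracting the reversed Kedlaya inequality for $\M_\epsilon$ from $\lambda\in\WR_n$ via the hat correspondence, and letting $\epsilon\downarrow0$ at the test vectors $(0,\dots,0,x,1)$ before running the argument of Theorem~\ref{thm:monotone} (which needs homogeneity, hence must be applied to the limit mean rather than to $\M_\epsilon$) --- is sound and is exactly the kind of bridge the paper leaves implicit. The only cosmetic slips are the codomain interval of the hat-transform (it is $(-\infty,\epsilon]$ or $(-\infty,0]$ depending on which domain you retain for $\M_\epsilon$, immaterial by the domain-independence remark) and that the pointwise convergence $\M_\epsilon\to\M$ in fact holds at every point of the domain, not only where $\sum\lambda_ix_i>0$.
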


\begin{proof}
Like in the case of $\WQ_n$, the second inclusion is the consequence of Example~\ref{exa:G12}. We will have to prove the first one only.
Let us keep a notation that whenever sequence $\lambda$ is defined, $\Lambda$ denotes its respective sequence of partial sums. 
Define the sets
\Eq{*}{
\mathcal{A}&:=\big\{ \lambda \in V_n(\R) \mathrel{\big|} \lambda_i>0 \text{ for all }i \text{, }\lambda_1=1 \big\},\\
\mathcal{B}&:=\big\{ x \in (0,1]^n \mathrel{\big|} x_1=1>x_2 \text{ and } x \text{ is nonincreasing}\big\}.
}

As $\lambda_1 = \Lambda_1$ for every $\lambda \in \mathcal{A}$, we can define functions $u \colon \mathcal{A} \to \mathcal{B}$ and $v \colon \mathcal{B} \to (0,1)\times(0,1]^{n-2}$ by $u(\lambda):=(\lambda_i/\Lambda_i)_{i=1}^n$ and $v(x):=(x_{i+1}/x_i)_{i=1}^{n-1}$, respectively.
Then we have (with the usual convention $\prod_{j=1}^0 (\cdot) :=1$)
\Eq{*}{
u^{-1}(x)=\Big(x_i \cdot \prod_{j=2}^{i} \frac1{1-x_j}\Big)_{i=1}^n, \qquad v^{-1}(y)=\Big(\prod_{j=1}^{i-1} y_j\Big)_{i=1}^n.
}
Therefore, both $u$ and $v$ are homeomorphisms. So is $w:=v \circ u \colon \mathcal{A} \to (0,1)\times(0,1]^{n-2}$. 
Moreover, by verifying both inclusions, we can see that $w(\Q^n \cap \mathcal{A}) = 
\Q^{n-1}\cap\big((0,1)\times(0,1]^{n-2}\big)$.

Now take any $\lambda^{(0)}\in V_n(\R)$. If $\lambda^{(0)}_k=0$ for some $k \le n$ then $(n,\lambda)$-Kedlaya inequality 
reduces to $(k-1,\lambda)$-Kedlaya inequality. Therefore we may suppose that all entires of $\lambda^{(0)}$ are 
positive. Equivalently, by the nullhomogeneity with respect to the weights, we may assume that $\lambda^{(0)}_1 =1$, 
i.e., $\lambda^{(0)} \in \mathcal{A}$.

Define $a^{(0)}:=w(\lambda^{(0)}) \in (0,1)\times(0,1]^{n-2}$. Take a sequence $(a^{(k)})_{k=1}^{\infty}$ having all 
elements in $\big(\Q\cap(0,1)\big)^{n-1}$ and convergent to $a^{(0)}$. For $\lambda^{(k)}:=s^{-1}(a^{(k)}) \in \Q^n \cap 
\mathcal{A}$, we immediately obtain $\lambda^{(k)} \to \lambda^{(0)}$.  However, by \eq{inclQn}, we know that $(\Q^n 
\cap \mathcal{A}) \subseteq \WQ_n \subseteq \WR_n$ for all $k \in \N$. Therefore $\lambda^{(k)} \in \WR_n$ for all $k 
\in \N$. Thus, as $\WR_n$ is closed in $W_n^0(\R)$, we get $\lambda^{(0)} \in \WR_n$, too.
\end{proof}

\section{Discussion}
In this section we will apply results already obtained to a important families of means. Each of subsection will 
consist of definition of the family, a characterization of Jensen concavity and, finally, applications of the notation 
of $\WR_n$. Let us stress that to get some particular examples we need to use Proposition~\ref{prop:main_R}. This purely 
technical operation will be however omit just to keep the notation more compact.

\subsection{Deviation means}
Given a function $E \colon I \times I \to \R$ vanishing on the diagonal of $I\times I$, continuous and 
strictly decreasing with respect to the second variable (we will call such a function to be a \emph{deviation function}), we can define a mean $\D_E\colon \bigcup_{n=1}^{\infty} I^n \to I$ in the following manner (cf.\ Dar\'oczy \cite{Dar71b}). For every $n\in\N$, for every vector $x 
=(x_1,\dots,x_n)\in I^n$ and $\lambda=(\lambda_1,\dots,\lambda_n)\in W_n(\R)$, the \emph{weighted deviation mean (or Dar\'oczy mean)} $\D_E(x,\lambda)$ is the unique solution $y$ of the equation
\Eq{*}{
\lambda_1E(x_1,y)+\dots+\lambda_nE(x_n,y)=0.
}
By \cite{Pal82a} deviation means are symmetric weighted mean which is continuous in the weights. The increasingness of a deviation mean $\D_E$ is equivalent to the increasingness of the deviation $E$ in its first variable. All these properties and characterizations are consequences of the general results obtained in a series of papers by Losonczi 
\cite{Los70a,Los71a,Los71b,Los71c,Los73a,Los77} (for Bajraktarevi\'c means and Gini means) and by Dar\'oczy \cite{Dar71b,Dar72b}, Dar\'oczy--Losonczi \cite{DarLos70}, Dar\'oczy--P\'ales \cite{DarPal82,DarPal83} (for deviation means) and by P\'ales \cite{Pal82a,Pal83b,Pal84a,Pal85a,Pal88a,Pal88d,Pal88e} (for deviation and quasi-deviation means).

The only property which requires some calculations is the characterization of the Jensen concavity of a deviation mean.

\begin{lem}
Let $E \colon I \times I \to \R$ be a deviation function which is differentiable with respect to its second variable and $\partial_2E(t,t)$ is nonvanishing for $t\in I$. Then $\D_E$ is Jensen concave if and only if the mapping $E^*\colon I^2\to\R$ defined by
\Eq{E*}{
  E^*(x,t):=-\frac{E(x,t)}{\partial_2E(t,t)}
}
is Jensen concave.
\end{lem}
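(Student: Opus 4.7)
My plan is to first reduce the problem to a normalized deviation, then handle the two directions separately. For the \emph{normalization step}, observe that $E^*(x,t) = g(t)\cdot E(x,t)$ where $g(t) := -1/\partial_2 E(t,t)$ depends only on $t$ and is nonzero. Therefore the defining equation $\sum_i \lambda_i E(x_i,y)=0$ holds if and only if $\sum_i \lambda_i E^*(x_i,y)=0$, so $\D_E = \D_{E^*}$. Differentiating $E^*(x,t)$ in $t$ and using $E(t,t)=0$ shows that $\partial_2 E^*(t,t)=-1$, and a second application of the normalization gives $E^{**}=E^*$. Hence proving ``$\D_E$ Jensen concave $\iff$ $E^*$ Jensen concave'' reduces to proving ``$\D_{E^*}$ Jensen concave $\iff$ $E^*$ Jensen concave'', so I may assume from the outset that $E=E^*$, i.e.\ $\partial_2 E(t,t)=-1$ for all $t\in I$.

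For \emph{sufficiency}, suppose $E$ is (jointly) Jensen concave on $I\times I$. Given $x,x'\in I^n$ and $\lambda\in W_n(\R)$, let $y:=\D_E(x,\lambda)$ and $y':=\D_E(x',\lambda)$. Applying joint Jensen concavity of $E$ coordinatewise yields
$$E\left(\tfrac{x_i+x_i'}{2},\tfrac{y+y'}{2}\right)\ge \tfrac12 E(x_i,y)+\tfrac12 E(x_i',y'),\qquad i\in\{1,\dots,n\}.$$
Multiplying by $\lambda_i\ge 0$ and summing, both terms on the right collapse to zero by the definition of $y$ and $y'$. Since the function $t\mapsto \sum_i \lambda_i E((x_i+x_i')/2,t)$ is strictly decreasing and vanishes at $\D_E((x+x')/2,\lambda)$, the resulting inequality forces $\D_E((x+x')/2,\lambda)\ge (y+y')/2$, which is the Jensen concavity of $\D_E$. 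This half makes no use of the normalization.

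Conversely, assume $\D_E$ is Jensen concave; I must prove joint Jensen concavity of $E$ at an arbitrary pair $(a,t), (b,s)\in I\times I$. The idea is to pick rational weights $(\mu,1-\mu)$ and two-entry inputs $(\alpha_1,\beta_1),(\alpha_2,\beta_2)\in I^2$ so that the associated deviation means $y_j=\D_E((\alpha_j,\beta_j),(\mu,1-\mu))$ take the prescribed values $t$ and $s$ and so that $\tfrac12(E(a,t)+E(b,s))$ reappears as a weighted combination arising from the two defining equations. Writing the midpoint mean implicitly via $\mu E(\tfrac{\alpha_1+\alpha_2}{2},z)+(1-\mu)E(\tfrac{\beta_1+\beta_2}{2},z)=0$ with $z=\tfrac{t+s}{2}$ and comparing with the Jensen concavity inequality $z\le \D_E(\cdot,(\mu,1-\mu))$, the strict decreasingness in the second variable converts the concavity of $\D_E$ back into a pointwise inequality on $E$. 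The normalization $\partial_2 E(t,t)=-1$ is what guarantees that when the test parameters are driven to a limiting configuration, the first-order corrections cancel cleanly, leaving exactly $E((a+b)/2,(t+s)/2)\ge\tfrac12(E(a,t)+E(b,s))$.

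The main obstacle is precisely this necessity direction: one must reconstruct a two-variable pointwise inequality on $E^*$ from a global property of the implicitly defined mean $\D_E$. The choice of inputs $(\alpha_j,\beta_j)$ and weight $\mu$ has to be made so that the two means $y_1,y_2$ realize the given target values $t$ and $s$, and the normalization $\partial_2 E^*(t,t)=-1$ is exactly what makes the implicit-function Jacobian trivial, preventing spurious terms from contaminating the translation. Without this normalization, the implication ``$\D_E$ Jensen concave $\Rightarrow E$ Jensen concave'' would typically fail, explaining why the passage to $E^*$ is indispensable in the statement.
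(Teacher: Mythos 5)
Your forward direction ($E^*$ Jensen concave $\Rightarrow$ $\D_E$ Jensen concave) is essentially correct and is the standard argument: apply joint midpoint concavity at the pairs $(x_i,y)$, $(x_i',y')$, sum with the weights $\lambda_i$, and use the sign of $t\mapsto\sum_i\lambda_iE\big(\tfrac{x_i+x_i'}{2},t\big)$ around its unique root. Two caveats about your normalization step, though: $\D_E=\D_{E^*}$ does hold because $E^*=g\cdot E$ with $g(t)=-1/\partial_2E(t,t)>0$, but $E^*$ need not itself be a deviation function (strict decreasingness in the second variable is not preserved under multiplication by $g(t)$), and your identity $\partial_2E^*(t,t)=-1$ silently uses continuity of $t\mapsto\partial_2E(t,t)$, which is not assumed. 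So the ``WLOG $E=E^*$'' reduction is shakier than you present it, although the forward implication can be run directly on $E^*$ using only the sign property of $\sum_i\lambda_iE^*(\cdot,t)$.

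The genuine gap is the necessity direction, which is where the entire content of the lemma sits; your text for it describes an intended construction but does not carry it out. Concretely: (1) you never exhibit the configuration $(\alpha_j,\beta_j,\mu)$ — the natural choice $\alpha_1=a$, $\alpha_2=b$ forces you to solve $E(\beta_1,t)=-\tfrac{\mu}{1-\mu}E(a,t)$ for $\beta_1\in I$, but a deviation function is only assumed continuous in its \emph{second} variable, so the intermediate value theorem is unavailable in the first variable and such a $\beta_1$ need not exist; (2) the asserted cancellation of ``first-order corrections'' requires a first-order expansion of $E(\cdot,u)$ in its first argument near the diagonal, i.e.\ existence and control of $\partial_1E$ there, which is likewise not among the hypotheses; (3) no actual limit argument controlling the error terms as $\mu\to0$ is given. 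For comparison, the paper does not prove this equivalence from scratch: it reduces, via the reflection $\widehat E(x,t):=-E(-x,-t)$ together with $\widehat{\D}_E=\D_{\widehat E}$, to the Jensen-convexity characterization of deviation means in \cite[Theorem 6]{Pal88a} — which is precisely the statement your necessity sketch is attempting to reprove. As written, your proposal must either import that theorem (as the paper does) or fully execute and justify the limiting construction; in its current form the hard half is missing.
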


\begin{proof}
Define $\widehat{E}(x,t):=-E(-x,-t)$ for $(x,t)\in(-I)^2$, and 
\Eq{*}{
 \widehat{E}^{*}(x,t)=-\frac{\widehat{E}(x,t)}{\partial_2\widehat{E}(t,t)}=-\frac{-E(-x,-t)}{\partial_2 E(-t,-t)}=-E^*(-x,-t).
}
In particular $\widehat{E}^*$ is Jensen convex if and only if $E^*$ is Jensen concave. 
 
Furthermore, in view of the identity $\widehat{\D}_E=\D_{\widehat{E}}$, we have that $\D_E$ is Jensen concave if and only if $\D_{\widehat{E}}$ is Jensen convex. Moreover, applying \cite[Theorem 6]{Pal88a} with appropriate substitutions we obtain that $\D_{\widehat{E}}$ is Jensen convex if and only if $\widehat{E}^\ast$ is Jensen convex. 

Finally, binding all equivalences above, one can easily finish the proof.
\end{proof}

Based on the above lemma, it is simple now to formulate a corollary which is important in view of Proposition~\ref{prop:main_R}.

\begin{prop}\label{prop:KedDev}
Let $E \colon I \times I \to \R$ be a deviation function which is differentiable with respect to its second variable such that $\partial_2E(t,t)$ is nonvanishing for $t\in I$ and the mapping $E^*$ defined by \eq{E*} is Jensen concave. Then $\D_E$ satisfies the $(n,\lambda)$-weighted Kedlaya inequality for all $n \in \N$ and $\lambda \in \WR_n$.
\end{prop}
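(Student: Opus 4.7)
The plan is to assemble this as a direct synthesis of three ingredients already established in the section, so the proof should be brief. First, I would recall from the opening paragraphs of the subsection on deviation means that $\D_E$ is automatically a symmetric weighted mean which is continuous in the weights (this is attributed to \cite{Pal82a} and the series of papers by Losonczi, Dar\'oczy, Dar\'oczy--P\'ales, and P\'ales). No work is needed here; I would just cite the relevant sentences.

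Second, I would invoke the lemma immediately preceding the proposition: under the differentiability hypothesis on $E$ and the nonvanishing of $\partial_2E(t,t)$ on $I$, Jensen concavity of $\D_E$ is equivalent to Jensen concavity of $E^*$. Since the hypothesis of the proposition assumes that $E^*$ is Jensen concave, we conclude that $\D_E$ is a Jensen concave weighted mean.

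Third, I would apply the defining property of $\WR_n$: it is exactly the set of weight vectors $\lambda\in W_n^0(\R)$ such that the $(n,\lambda)$-Kedlaya inequality \eqref{KI} holds for every symmetric, Jensen concave $\R$-weighted mean which is continuous in the weights. Having verified symmetry, Jensen concavity, and continuity in the weights for $\D_E$, the conclusion that $\D_E$ satisfies the $(n,\lambda)$-Kedlaya inequality for each $n\in\N$ and each $\lambda\in\WR_n$ is immediate.

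There is no substantial obstacle: the content of the proposition is essentially bookkeeping, matching the hypotheses of the definition of $\WR_n$ to the properties of $\D_E$. The only nontrivial input is the characterization of Jensen concavity of $\D_E$ via $E^*$, but that was already proved in the preceding lemma. I would therefore write the proof as a short paragraph referencing the three ingredients in sequence, with no computation.
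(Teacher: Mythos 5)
Your proposal is correct and matches the paper's (implicit) argument exactly: the paper states this proposition without a written proof, introducing it with ``Based on the above lemma, it is simple now to formulate\dots'', and the intended justification is precisely your three-step synthesis --- $\D_E$ is a symmetric $\R$-weighted mean continuous in the weights by the cited literature, it is Jensen concave by the preceding lemma together with the hypothesis on $E^*$, and the conclusion then follows directly from the definition of $\WR_n$. Nothing is missing.
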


Observe that if $E(x,y)=f(x)-f(y)$ for some continuous, strictly monotone function $f \colon I \to \R$, then the deviation mean $\D_E$ reduces to the quasi-arithmetic mean $\QA{f}$. Therefore, deviation means include quasi-arithmetic means. One can also notice that Bajraktarevi\'c means and Gini means are also form subclasses of deviation means.

\subsection{Homogeneous Deviation means}
It is known \cite{Pal88e} that a deviation mean generated by a continuous deviation function $E\colon \R_+^2\to\R$ is homogeneous if and only if $E$ is of the form $E(x,y)=g(y)f(\tfrac xy)$ for some continuous functions $f,g\colon\R_+\to\R$ such that $f$ vanishes at $1$ and $g$ is positive. Clearly, the deviation mean generated by $E$ is determined only by the function $f$, therefore, as we are going to deal with homogeneous deviation means, let $\E_f$ denote the corresponding deviation mean. 

Let us just mention that homogeneous deviation means generalize power means. Indeed, whenever 
$I=\R_+$ and $f=\pi_p$, where $\pi_p(x):=x^p$ if $p\ne 0$ and $\pi_0(x):=\ln x$, then $\E_{\pi_p}$ coincide with $\P_p$ for all $p \in \R$. It is also known \cite[Theorem~2.3]{PalPas17a}

\begin{thm}\label{CDM}
Let $f\colon\R_+\to\R$ a strictly increasing concave function with $f(1)=0$. Then the function $E\colon\R_+^2\to\R$ defined by $E(x,y):=f\big(\frac xy\big)$ is a deviation and the corresponding deviation mean $\E_f:=\D_E$ is homogeneous, continuous, increasing and Jensen concave.
\end{thm}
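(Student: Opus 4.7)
The plan is to check the four assertions one at a time, with only Jensen concavity requiring real work.

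First I would verify that $E(x,y):=f(x/y)$ is a deviation. Since $f(1)=0$, the function vanishes on the diagonal. Concavity of $f$ on the open ray $\R_+$ forces continuity of $f$, hence of $E(x,\cdot)$. Strict monotonicity of $f$ combined with the strict decreasingness of $y\mapsto x/y$ makes $E(x,\cdot)$ strictly decreasing; the same strict monotonicity gives $\sign E(x,t)=\sign(f(x/t)-f(1))=\sign(x-t)$. Homogeneity of $\E_f$ is immediate from $E(tx,ty)=f(tx/ty)=f(x/y)=E(x,y)$: if $y$ solves the defining equation for the data $(x,\lambda)$, then $ty$ solves it for $(tx,\lambda)$. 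Continuity of $\E_f$ (in entries and in weights) and strict increasingness follow from the general deviation mean results of P\'ales cited earlier in the paper, using continuity of $E$ and strict increasingness of $E$ in its first variable.

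The heart of the matter is Jensen concavity, for which I would apply the previous lemma characterizing this property in terms of the auxiliary function
\[
E^*(x,t)=-\frac{E(x,t)}{\partial_2 E(t,t)}.
\]
A direct computation gives $\partial_2 E(x,t)=-\tfrac{x}{t^2}f'(x/t)$, so $\partial_2 E(t,t)=-f'(1)/t$, whence
\[
E^*(x,t)=\frac{t\,f(x/t)}{f'(1)}.
\]
Since $f$ is strictly increasing and concave, $f'(1)>0$, so up to a positive factor $E^*$ is the perspective $(x,t)\mapsto tf(x/t)$ of the concave function $f$. The perspective of a concave function is concave on $\R_+\times\R_+$: this is a standard and elementary fact, provable by writing a midpoint convex combination of $(x_1,t_1)$ and $(x_2,t_2)$ as a weighted convex combination in the first argument of $f$ with weights $\tfrac{t_1}{t_1+t_2}$ and $\tfrac{t_2}{t_1+t_2}$, and then applying concavity of $f$. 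Hence $E^*$ is concave, in particular Jensen concave, and the preceding lemma gives Jensen concavity of $\E_f$.

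The main obstacle is a technical one: the characterization lemma presumes $f$ (and hence $E$) is differentiable with $\partial_2 E(t,t)\neq 0$, whereas the theorem only assumes $f$ is concave and strictly increasing, so $f'(1)$ may fail to exist at a countable set of points (never at all of them, actually, but we cannot pinpoint $1$). I would bypass this by a standard smoothing argument: approximate $f$ uniformly on compacta by a sequence of strictly increasing, smooth, concave functions $f_k$ with $f_k(1)=0$ (for instance, by mollification followed by a suitable affine correction and shift). Each $\E_{f_k}$ is Jensen concave by the argument above, and the continuous dependence of deviation means on the deviation function passes Jensen concavity to the limit $\E_f$. Once Jensen concavity is in hand, continuity on the interior follows from the Bernstein--Doetsch theorem, as remarked in the text.
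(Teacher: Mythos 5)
Your proposal is essentially correct, but note that the paper itself contains no proof of this statement: Theorem~\ref{CDM} is quoted from \cite{PalPas17a}, so the relevant comparison is with the natural direct argument rather than with a written proof in this text. Your verification that $E(x,y)=f(x/y)$ is a deviation function, and your derivations of homogeneity, increasingness and continuity, are fine. The genuine divergence is in the Jensen concavity step. You route it through the differentiable characterization lemma, compute $E^*(x,t)=t\,f(x/t)/f'(1)$, invoke concavity of the perspective function, and then must repair the possible nonexistence of $f'(1)$ by mollification and a limit passage. That repair does work: mollified approximants are smooth, strictly increasing and concave; their derivative at $1$ is automatically positive (a differentiable concave function on $\R_+$ with vanishing derivative at some $t_0$ would be nonincreasing on $[t_0,\infty)$); and locally uniform convergence of the $f_k$ forces convergence of the roots of $\sum_i\lambda_i f_k(x_i/y)=0$, so midpoint concavity survives the limit. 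However, the whole detour is avoidable, because the very perspective-function computation you use for $E^*$ proves Jensen concavity of $\E_f$ directly and with no regularity at all. Setting $u:=\E_f(x,\lambda)$ and $v:=\E_f(x',\lambda)$, concavity of $f$ gives
\[
\sum_{i=1}^n\lambda_i\, f\Big(\frac{x_i+x_i'}{u+v}\Big)
=\sum_{i=1}^n\lambda_i\, f\Big(\frac{u}{u+v}\cdot\frac{x_i}{u}+\frac{v}{u+v}\cdot\frac{x_i'}{v}\Big)
\geq \frac{u}{u+v}\sum_{i=1}^n\lambda_i f\Big(\frac{x_i}{u}\Big)+\frac{v}{u+v}\sum_{i=1}^n\lambda_i f\Big(\frac{x_i'}{v}\Big)=0,
\]
and since the left-hand side equals $\sum_i\lambda_i E\big(\tfrac{x_i+x_i'}{2},\tfrac{u+v}{2}\big)$ while $y\mapsto\sum_i\lambda_i E\big(\tfrac{x_i+x_i'}{2},y\big)$ is strictly decreasing and vanishes at $y=\E_f\big(\tfrac{x+x'}{2},\lambda\big)$, it follows that $\E_f\big(\tfrac{x+x'}{2},\lambda\big)\geq\tfrac{u+v}{2}$. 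This is exactly the device used in Example~\ref{exa:G12} of the paper (there with the convex function $s\mapsto s^2-s$), and it makes the smoothing step unnecessary; what your approach buys in exchange is a reusable reduction to the $E^*$-criterion, which applies to deviation functions that are not of perspective type.
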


This theorem has an immediate corollary which is implied by the definition of $\WR_n$ itself. Its usefulness is provided by Proposition~\ref{prop:main_R}.
\begin{prop}
\label{prop:Kedhomdev}
Let $f\colon\R_+\to\R$ a strictly increasing concave function with $f(1)=0$. Then $\E_f$ satisfies the $(n,\lambda)$-weighted Kedlaya inequality for all $n \in \N$ and $\lambda \in \WR_n$.
\end{prop}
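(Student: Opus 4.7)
The plan is essentially a direct application of the definition of $\WR_n$ together with the properties of $\E_f$ collected in Theorem~\ref{CDM} and in the introductory discussion of deviation means. Let me spell out what needs to be checked.

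First, I would verify that $\E_f$ is an $\R$-weighted mean on $\R_+$ in the sense of the formal definition from Section~2. The defining implicit equation $\sum_{i=1}^n\lambda_i f(x_i/y)=0$ makes sense because $f$ is strictly increasing and continuous with $f(1)=0$, so the left-hand side is a strictly decreasing continuous function of $y$ that changes sign between $\min x_i$ and $\max x_i$; hence a unique solution $y=\E_f(x,\lambda)\in[\min x_i,\max x_i]$ exists. The nullhomogeneity in the weights, the reduction principle, and the elimination principle are all immediate from the implicit equation (they amount to scaling, duplicating, or discarding terms in the sum).

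Next, I would collect the structural hypotheses required by the definition of $\WR_n$: symmetry, Jensen concavity, and continuity in the weights. Symmetry is trivial from the symmetry of the defining sum in $i$. Jensen concavity is provided by Theorem~\ref{CDM} itself (this is the nontrivial ingredient, but it is already proved). Continuity in the weights follows from the general results on deviation means cited in the paragraph preceding Proposition~\ref{prop:KedDev} (the reference \cite{Pal82a}), which apply verbatim since $E(x,y):=f(x/y)$ is a deviation function.

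With these properties in hand, the proposition is immediate: by the very definition of $\WR_n$ recalled just after the introduction of $\WQ_n$ and $\WR_n$, any $\R$-weighted mean on an interval that is symmetric, Jensen concave, and continuous in the weights satisfies the $(n,\lambda)$-Kedlaya inequality for every $\lambda\in\WR_n$; applying this to $\M=\E_f$ yields the conclusion. I would then close by noting, as the paper already does, that Proposition~\ref{prop:main_R} guarantees that $\WR_n$ contains the natural class $V_n(\R)$, so the statement is nonvacuous and in fact covers all weight vectors whose ratio sequence $(\lambda_i/\Lambda_i)$ is nonincreasing. There is no real obstacle here; the substantive work (Jensen concavity of $\E_f$ and the density/closure argument producing $V_n(\R)\subseteq\WR_n$) has already been done elsewhere in the paper.
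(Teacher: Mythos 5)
Your argument is correct and matches the paper's intent exactly: the paper treats this as an immediate consequence of Theorem~\ref{CDM} (Jensen concavity of $\E_f$), the cited facts that deviation means are symmetric weighted means continuous in the weights, and the definition of $\WR_n$. The details you fill in (existence and uniqueness of the solution of $\sum_i\lambda_i f(x_i/y)=0$, verification of the weighted-mean axioms) are the routine ones the paper leaves implicit.
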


\subsection{Quasi-arithmetic means}
Idea of quasi-arithmetic means first only glimpsed in a pioneering paper by Knopp \cite{Kno28}. Their theory was somewhat later axiomatized in a series of three independent but nearly simultaneous papers by De Finetti \cite{Def31}, Kolmogorov \cite{Kol30}, and Nagumo \cite{Nag30} at the beginning of 1930s. 

Let $I$ be an interval and $f \colon I \to \R$ be a continuous, strictly monotone function. For $n \in \N$ and for a 
given vector $x=(x_1,\dots,x_n)\in I^n$ and $\lambda=(\lambda_1,\dots,\lambda_n)\in W_n(\R)$, set
\Eq{*}{
  \QA{f}(x,\lambda):=f^{-1} \left( \frac{\lambda_1f(x_1)+\cdots+\lambda_nf(x_n)}{\lambda_1+\cdots+\lambda_n} \right).
}
The weighted mean $\QA{f} \colon \bigcup_{n=1}^{\infty} I^n \times W_n(\R)\to I$ defined this way is called the \emph{weighted quasi-arithmetic mean generated by the function $f$}. Quasi-arithmetic means are a natural generalization of power means. Indeed, like in the case of a deviation mean, for all $p \in \R$, means $\QA{\pi_p}$ and $\P_p$ are equal. These means share most of the properties of power means. In particular, it is easy to verify that they are symmetric and strictly increasing. In fact, they admit even more properties of power means (cf. \cite{Kol30}, \cite{Acz48a}). Let us recall some meaningful result \cite[Theorem~2.2]{PalPas17a}

\begin{thm}\label{concQA}
Let $f\colon I \to\R$ be a twice continuously differentiable function with a nonvanishing first derivative. Then the wighted quasi-arithmetic mean $\QA{f}$ is Jensen concave if and only if either $f''$ is identically zero or $f''$ is nowhere zero and the ratio function $\frac{f'}{f''}$ is a convex and negative function on $I$.
\end{thm}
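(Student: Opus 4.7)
The plan is to translate Jensen concavity of $\QA{f}$ into a matrix condition via the Hessian, and then reduce that matrix condition to a scalar differential inequality on $f$. Since $\QA{f}$ is $C^2$ whenever $f$ is and means are locally bounded, the Bernstein--Doetsch theorem already invoked implies that Jensen concavity of $\QA{f}$ coincides with ordinary concavity, i.e., with the negative semidefiniteness of the Hessian of $(x_1,\dots,x_n)\mapsto \QA{f}(x,\lambda)$ for every $n$ and every normalized $\lambda$. I may assume $f'>0$ on $I$, since replacing $f$ by $-f$ preserves both $\QA{f}$ and the ratio $f'/f''$.

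A direct calculation, writing $F:=\QA{f}(x,\lambda)$ and normalizing $\sum\lambda_i=1$, yields
$$\partial_i\partial_j \QA{f}(x,\lambda)=\frac{\delta_{ij}\lambda_i f''(x_i)}{f'(F)}-\frac{\lambda_i\lambda_j f'(x_i) f'(x_j) f''(F)}{(f'(F))^3}.$$
The substitution $\eta_i:=f'(x_i)\xi_i$ in the associated quadratic form, after multiplying by $f'(F)>0$, turns negative semidefiniteness of the Hessian into the condition
$$\psi(F)\Big(\sum_{i=1}^n\lambda_i\eta_i\Big)^{\!2}\le\sum_{i=1}^n\lambda_i\psi(x_i)\eta_i^2\quad(\eta\in\R^n),\qquad \psi:=-\frac{f''}{(f')^2}.$$
Specializing to $x_1=\cdots=x_n$ forces $\psi\ge 0$, that is $f''\le 0$ on $I$. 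If $f''\equiv 0$ then $f$ is affine, $\QA{f}$ reduces to the weighted arithmetic mean, and the theorem is trivial; otherwise a continuity argument rules out interior zeros of $f''$, giving $f''<0$ strictly.

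Assuming $f''<0$, set $\Phi:=1/\psi=-(f')^2/f''>0$. The Cauchy--Schwarz inequality applied to the vectors $\bigl(\sqrt{\lambda_i\psi(x_i)}\,\eta_i\bigr)$ and $\bigl(\sqrt{\lambda_i/\psi(x_i)}\bigr)$ shows that the displayed quadratic inequality holds for every $\eta\in\R^n$ if and only if
$$\sum_{i=1}^n \lambda_i\Phi(x_i)\le \Phi(F).$$
Via the bijection $u_i:=f(x_i)$, this is equivalent to Jensen concavity of $\Phi\circ f^{-1}$ on $f(I)$ for every $n,\lambda,x$, which in the $C^2$ setting is equivalent to $(\Phi\circ f^{-1})''\le 0$ pointwise.

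Finally, with $r:=f'/f''$, a direct differentiation exploiting $\Phi=-f'\,r$ gives $(\Phi\circ f^{-1})'(u)=-1-r'(x)$ and hence the clean identity $(\Phi\circ f^{-1})''(u)=-r''(x)/f'(x)$, where $x=f^{-1}(u)$. Since $f'>0$, this is equivalent to $r''\ge 0$, i.e., convexity of $r=f'/f''$, and the negativity of $r$ is automatic from $f'>0$ and $f''<0$. Every step above reverses, yielding the converse implication. The main computational obstacle is the final identity $(\Phi\circ f^{-1})''\propto -r''$, which absorbs the nontrivial cancellations among the derivatives of $f$ and is the one place where careful algebra is truly needed.
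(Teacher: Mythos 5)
The paper does not actually prove Theorem~\ref{concQA}: it is imported verbatim from \cite[Theorem~2.2]{PalPas17a}, so there is no in-paper argument to compare yours against. Judged on its own, your reconstruction follows what is essentially the standard route: Bernstein--Doetsch to upgrade Jensen concavity to concavity, the Hessian of $x\mapsto\QA{f}(x,\lambda)$, the substitution $\eta_i=f'(x_i)\xi_i$, and the Cauchy--Schwarz reduction of the resulting quadratic inequality to $\sum_i\lambda_i\Phi(x_i)\le\Phi(F)$ with $\Phi=-(f')^2/f''$, i.e.\ to concavity of $\Phi\circ f^{-1}$ on $f(I)$. The Hessian formula, the forced sign $f''\le 0$, and the Cauchy--Schwarz equivalence (with equality at $\eta_i=\Phi(x_i)$) all check out.

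Two places need repair. First, ``a continuity argument rules out interior zeros of $f''$'' is an assertion, not an argument; it should be spelled out, e.g.\ take $n=2$, a point $x_1$ with $\psi(x_1)=0$, a point $x_2$ with $\psi(x_2)>0$, and $\eta=(1,0)$ in your quadratic inequality to force $\psi(F)=0$ for every $F$ strictly between $x_1$ and $x_2$; letting $F\to x_2$ contradicts continuity of $\psi$. Second, and more substantively, the closing identity $(\Phi\circ f^{-1})''(u)=-r''(x)/f'(x)$ is not available under the stated hypotheses: $f$ is only $C^2$, so $r=f'/f''$ and $\Phi=-f'r$ are merely continuous, and neither $r''$ nor $(\Phi\circ f^{-1})''$ need exist --- indeed the theorem deliberately phrases its conclusion as convexity of $f'/f''$ without assuming any differentiability of that ratio. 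The equivalence you need (concavity of $g:=\Phi\circ f^{-1}$ iff convexity of $r$) is true, but must be run at first order: if $g$ is concave it is locally Lipschitz, hence so are $\Phi=g\circ f$ and $r=-\Phi/f'$, and almost everywhere $r'(x)=-1-g'(f(x))$ is nondecreasing, whence $r$ (being absolutely continuous) is convex; conversely, if $r$ is convex it is locally Lipschitz, so $g$ is locally Lipschitz with $g'(u)=-1-r'(f^{-1}(u))$ a.e.\ nonincreasing, whence $g$ is concave. In other words, your computation $g'=-1-r'\circ f^{-1}$ is the identity that carries the proof; the second differentiation should be replaced by monotonicity of the (a.e.\ or one-sided) first derivatives. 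With these two repairs the argument is complete and matches the cited source in spirit.
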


Similarly like it was done in the case of Theorem~\ref{CDM}, this one could be also used to obtain some results concerning Kedlaya inequality. Let us stress again meaningfulness of Proposition~\ref{prop:main_R}.

\begin{prop}\label{prop:KedQA}
Let $f\colon I \to\R$ be a twice continuously differentiable function with a nonvanishing first derivative such that either $f''$ is identically zero or $f''$ is nowhere zero and the ratio function $\frac{f'}{f''}$  is a convex and negative function on $I$. Then $\QA{f}$  satisfies the $(n,\lambda)$-weighted Kedlaya inequality for all $n \in \N$ and $\lambda \in \WR_n$.
\end{prop}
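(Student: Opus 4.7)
The plan is to show that $\QA{f}$ meets the three defining conditions of the class of $\R$-weighted means singled out in the definition of $\WR_n$, namely symmetry, Jensen concavity, and continuity in the weights, after which membership in $\WR_n$ is invoked by definition.

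First I would verify the easy structural properties. Symmetry of $\QA{f}$ is immediate from the fact that both the numerator $\lambda_1 f(x_1)+\cdots+\lambda_n f(x_n)$ and the denominator $\lambda_1+\cdots+\lambda_n$ are invariant under a simultaneous permutation of the $x_i$'s and $\lambda_i$'s. Continuity in the weights on $W_n(\R)$ follows because, fixing $x\in I^n$, the map $\lambda\mapsto\frac{\lambda_1f(x_1)+\cdots+\lambda_nf(x_n)}{\lambda_1+\cdots+\lambda_n}$ is a continuous function of $\lambda$ on the open cone $\{\lambda_1+\cdots+\lambda_n>0\}$ with values in the convex hull of $\{f(x_1),\dots,f(x_n)\}$, and $f^{-1}$ is continuous on $f(I)$ because $f$ is continuous and strictly monotone.

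Next, Jensen concavity is precisely the content of Theorem~\ref{concQA}: our hypothesis on $f$ is that either $f''\equiv 0$ (so $f$ is affine and $\QA{f}$ reduces to the weighted arithmetic mean, which is Jensen affine and hence both Jensen concave and convex) or $f''$ is nowhere zero with $f'/f''$ convex and negative, and in either case that theorem yields that $\QA{f}$ is Jensen concave.

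Finally, since $\QA{f}\colon \bigcup_{n=1}^\infty I^n\times W_n(\R)\to I$ is a symmetric, Jensen concave $\R$-weighted mean that is continuous in the weights, the very definition of $\WR_n$ gives that $\QA{f}$ satisfies the $(n,\lambda)$-weighted Kedlaya inequality for every $\lambda\in\WR_n$ and every $n\in\N$. There is essentially no obstacle here; the substance is hidden in the earlier Theorem~\ref{concQA} and in Proposition~\ref{prop:main_R}, which guarantees that $\WR_n$ is large enough to contain, for instance, every $\lambda\in V_n(\R)$. The proof is thus a direct application of those two results.
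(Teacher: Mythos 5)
Your argument is correct and matches the paper's (implicit) reasoning exactly: the paper states Proposition~\ref{prop:KedQA} as an immediate consequence of Theorem~\ref{concQA} together with the definition of $\WR_n$, which is precisely what you spell out by checking symmetry, continuity in the weights, and Jensen concavity. The only cosmetic omission is that you do not explicitly note that $\QA{f}$ satisfies the axioms (i)--(iv) of an $\R$-weighted mean (nullhomogeneity, reduction, mean value, elimination), but these are trivial for quasi-arithmetic means and the paper does not verify them either.
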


\subsection{Gini means}
Given two real numbers $p,q\in\R$, define the function $\chi_{p,q}\colon\R_+\to\R$ by
\Eq{*}{
  \chi_{p,q}(x)
  :=\begin{cases}
    \dfrac{x^p-x^q}{p-q} & \mbox{ if } p\neq q, \\[4mm]
    x^p\ln(x) & \mbox{ if } p=q.
    \end{cases}
}
In this case, the function $E_{p,q}\colon \R_+^2\to\R$ defined by 
\Eq{*}{
  E_{p,q}(x,y):=y^p\chi_{p,q}\Big(\frac{x}{y}\Big)
}
is a deviation function on $\R_+$. The wighted deviation mean generated by $E_{p,q}$ will be denoted by $\G_{p,q}$ and called the \emph{weighted Gini mean of parameter $p,q$} (cf.\ \cite{Gin38}). One can easily see that $\G_{p,q}$ has the following explicit form:
\Eq{GM}{
  \G_{p,q}(x,\lambda)
   :=\left\{\begin{array}{ll}
    \left(\dfrac{\lambda_1x_1^p+\cdots+\lambda_nx_n^p}
           {\lambda_1x_1^q+\cdots+\lambda_nx_n^q}\right)^{\frac{1}{p-q}} 
      &\mbox{if }p\neq q, \\[4mm]
     \exp\left(\dfrac{\lambda_1x_1^p\ln(x_1)+\cdots+\lambda_nx_n^p\ln(x_n)}
           {\lambda_1x_1^p+\cdots+\lambda_nx_n^p}\right) \quad
      &\mbox{if }p=q.
    \end{array}\right.
}
Clearly, in the particular case $q=0$, the mean $\G_{p,q}$ reduces to the $p$th power mean $\P_p$. It is also obvious that $\G_{p,q}=\G_{q,p}$.
It is known \cite{Los71a,Los71c} that $\G_{p,q}$ is concave if and only if 
\Eq{pq}{
\min(p,q)\leq0\leq\max(p,q)\leq 1.
}
Therefore, as an immediate consequence, we have

\begin{prop}\label{prop:KedGini}
If $p,\,q \in \R$ satisfy \eq{pq}, then $\G_{p,q}$ satisfies the $(n,\lambda)$-weighted Kedlaya inequality for all $n \in \N$ and $\lambda \in \WR_n$.
\end{prop}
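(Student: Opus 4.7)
The plan is to verify that under the hypothesis \eqref{pq}, the Gini mean $\G_{p,q}$ satisfies the three properties demanded by the definition of the set $\WR_n$: symmetry, Jensen concavity, and continuity in the weights. Once these are checked, the conclusion is an immediate unpacking of the definition of $\WR_n$.

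First I would observe that $\G_{p,q}$ is a weighted deviation mean: by construction in the subsection on Gini means, $\G_{p,q} = \D_{E_{p,q}}$ with the explicit deviation $E_{p,q}(x,y) = y^p \chi_{p,q}(x/y)$. In particular, as recalled in the deviation means subsection (citing \cite{Pal82a}), every weighted deviation mean is symmetric and continuous in the weights. Hence $\G_{p,q}$ automatically has these two structural properties, regardless of the specific values of $p$ and $q$.

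Next, I would invoke the cited characterization of Losonczi \cite{Los71a,Los71c}: the Gini mean $\G_{p,q}$ is (Jensen) concave on $\R_+$ if and only if the parameters satisfy \eqref{pq}, namely $\min(p,q)\leq 0\leq \max(p,q)\leq 1$. Since we are assuming exactly this condition, the Jensen concavity of $\G_{p,q}$ is guaranteed.

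Finally, $\G_{p,q}$ is now a symmetric, Jensen concave $\R$-weighted mean on $\R_+$ which is continuous in the weights. By the very definition of $\WR_n$ (item (b) preceding Proposition~\ref{prop:main_R}), every $\lambda\in\WR_n$ is precisely a weight vector such that the $(n,\lambda)$-Kedlaya inequality holds for all such means. Consequently, for each $n\in\N$ and each $\lambda\in\WR_n$, the mean $\G_{p,q}$ satisfies the $(n,\lambda)$-weighted Kedlaya inequality \eqref{KI}. There is no real obstacle here; the entire substance has already been localized in Proposition~\ref{prop:main_R} and the Losonczi characterization, and the proof consists only of assembling these two ingredients.
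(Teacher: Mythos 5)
Your proposal is correct and follows exactly the route the paper intends: the paper states Proposition~\ref{prop:KedGini} as an ``immediate consequence'' of the Losonczi characterization that $\G_{p,q}$ is concave precisely under \eq{pq}, combined with the symmetry and weight-continuity of deviation means and the definition of $\WR_n$. Your write-up simply makes these three ingredients explicit, which matches the paper's argument in substance and order.
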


\subsection{Power means}
Let just recall from the previous sections that $\P_p=\G_{p,0}=\QA{\pi_p}$, therefore it was already covered in the previous results. In fact we can use either Proposition~\ref{prop:KedQA} or \ref{prop:KedGini} to obtain
\begin{prop}
For every $p\le 1$ the power mean $\P_p$ satisfies the $(n,\lambda)$-weighted Kedlaya inequality for all $n \in \N$ and $\lambda \in \WR_n$.
\end{prop}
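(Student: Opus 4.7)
The plan is to reduce directly to Proposition~\ref{prop:KedGini} (alternatively, to Proposition~\ref{prop:KedQA}); no new work on the Kedlaya side is needed. As recalled in the preceding subsection, the $p$th power mean coincides with the Gini mean of parameters $(p,0)$, i.e.\ $\P_p=\G_{p,0}$, so it suffices to verify the concavity condition \eq{pq} for the pair $(p,0)$.

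With $q=0$ and $p\leq 1$, the three inequalities $\min(p,q)\leq 0\leq\max(p,q)\leq 1$ become $\min(p,0)\leq 0$, $\max(p,0)\geq 0$, and $\max(p,0)\leq 1$, all of which are immediate. Hence $\P_p=\G_{p,0}$ is a symmetric, Jensen concave $\R$-weighted mean which is continuous in the weights, and Proposition~\ref{prop:KedGini} applies verbatim.

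As a sanity check (and the only step where a minor computation enters), one may alternatively invoke Proposition~\ref{prop:KedQA} via $\P_p=\QA{\pi_p}$: for $p=1$ the second derivative of $\pi_p$ vanishes identically; for $p\in(-\infty,1]\setminus\{0,1\}$ one has $\pi_p''(x)=p(p-1)x^{p-2}\neq 0$ and
\Eq{*}{
\frac{\pi_p'(x)}{\pi_p''(x)}=\frac{x}{p-1},
}
which is linear (hence convex) and, since $p<1$, negative on $\R_+$; the case $p=0$ is handled analogously with $f(x)=\ln x$, giving $f'/f''=-x$. Either route yields the conclusion, and there is no substantive obstacle: the entire content of the proposition is that the sign conditions required by the earlier characterizations of Jensen concavity are satisfied when $p\le 1$.
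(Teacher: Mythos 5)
Your proposal is correct and follows exactly the route the paper takes: the paper's own "proof" is the one-line observation that $\P_p=\G_{p,0}=\QA{\pi_p}$ and that either Proposition~\ref{prop:KedGini} (checking \eq{pq} for the pair $(p,0)$) or Proposition~\ref{prop:KedQA} (checking $\pi_p'/\pi_p''=x/(p-1)$) applies. You simply spell out the sign verifications that the paper leaves implicit, and both computations are right.
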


Obviously for $p=1$ the power mean $\P_1$ is just an arithmetic mean. Therefore in this case, Kedlaya inequality \eq{KI} becomes an equality for all $n \in \N$ and a pair $x \in \R^n$ with weights $\lambda \in W^0_n(\R)$. In the case $p=0$, the inequality \eq{KI} reduces to the inequality stated in Theorem~\ref{thm:Kedlaya99} which was discovered by Kedlaya \cite{Ked99}. Further important extensions and generalizations of the power mean Kedlaya inequality can be found in the papers \cite{Nan52,Ben99,Bul98,Gao10,Gao15,HarTak03,PecSto01,Sad06,TarTar99,Ciz05,CizPec98}.

\section{Conclusions}

The main result of the paper, the weighted Kedlaya inequality established in Theorem~\ref{thm:main}, generalizes Kedlaya celebrated result of 1999, which was established for the geometric mean. The inequality has several particular cases in the classes of deviation means, quasi-arithmetic means, Gini means and power means.

\bigskip

\subsubsection*{Funding}
The research of the first author was supported by the Hungarian Scientific Research Fund (OTKA) Grant K-111651 and by the
EFOP-3.6.2-16-2017-00015 project. This project is co-financed by the European Union and the European Social Fund.

\subsubsection*{Competing interests}
The authors declare that they have no competing interests.

\subsubsection*{Authors’ contributions}
Both authors contributed equally to the manuscript, read and approved the final manuscript.


\end{document}